\newtheorem{Theorem}{Theorem}[section]
\newtheorem{Proposition}[Theorem]{Proposition}
\newtheorem{Lemma}[Theorem]{Lemma}
\newtheorem{Corollary}[Theorem]{Corollary}
\theoremstyle{definition}
\newtheorem{Definition}[Theorem]{Definition}
\newtheorem{Remark}[Theorem]{Remark}
\newcommand{\bTheorem}[1]{
\begin{Theorem} \label{T#1} }
\newcommand{\eT}{\end{Theorem}}
\newcommand{\bProposition}[1]{
\begin{Proposition} \label{P#1}}
\newcommand{\eP}{\end{Proposition}}
\newcommand{\bLemma}[1]{
\begin{Lemma} \label{L#1} }
\newcommand{\eL}{\end{Lemma}}
\newcommand{\bCorollary}[1]{
\begin{Corollary} \label{C#1} }
\newcommand{\eC}{\end{Corollary}}
\newcommand{\bRemark}[1]{
\begin{Remark} \label{R#1} }
\newcommand{\eR}{\end{Remark}}
\newcommand{\vuB}{\bar{{\bf u}}}
\newcommand{\bDefinition}[1]{
\begin{Definition} \label{D#1} }
\newcommand{\eD}{\end{Definition}}
\newcommand{\vrd}{\varrho_{\delta}}
\newcommand{\vtd}{\vartheta_{\delta}}
\newcommand{\vud}{\vu_{\delta}}
\newcommand{\vtB}{\bar{\vt}}
\newcommand{\bfphi}{\boldsymbol{\varphi}}
\newcommand{\bFormula}[1]{
\begin{equation} \label{#1}}
\newcommand{\eF}{\end{equation}}
\newcommand{\Ov}[1]{\bar{#1}}
\newcommand{\Oov}[1]{\overline{#1}}
\newcommand{\DC}{C^\infty_c}
\newcommand{\aleq}{\stackrel{<}{\sim}}
\newcommand{\ageq}{\stackrel{>}{\sim}}
\newcommand{\vr}{\varrho}
\newcommand{\vre}{\vr_\ep}
\newcommand{\vte}{\vt_\ep}
\newcommand{\vue}{\vu_\ep}
\newcommand{\vt}{\vartheta}
\newcommand{\vu}{\vc{u}}
\newcommand{\vQ}{\vc{q}}
\newcommand{\vc}[1]{{\bf #1}}
\newcommand{\Div}{{\rm div}}
\newcommand{\Grad}{\nabla}
\newcommand{\dx}{\,{\rm d} {x}}
\newcommand{\intO}[1]{\int_{\Omega} #1 \dx}
\newcommand{\ep}{\varepsilon}
\newcommand{\R}{\mathbb{R}}
\newcommand{\C}{\mbox{\F C}}
\def\eqldef{\overset{\text{\tiny\rm def}}{=}}
\def\tra{\mathsf{T}}
\def\C{{\mathrm{C}}}
\def\L{{\mathrm{L}}}
\def\X{{\mathrm{X}}}
\def\Y{{\mathrm{Y}}}
\def\W{{\mathrm{W}}}
\definecolor{Cgrey}{rgb}{0.85,0.85,0.85}
\definecolor{Cblue}{rgb}{0.50,0.85,0.85}
\definecolor{Cred}{rgb}{1,0,0}
\definecolor{fancy}{rgb}{0.10,0.85,0.10}
\newcommand\Cbox[2]{%
    \newbox\contentbox%
    \newbox\bkgdbox%
    \setbox\contentbox\hbox to \hsize{%
        \vtop{
            \kern\columnsep
            \hbox to \hsize{%
                \kern\columnsep%
                \advance\hsize by -2\columnsep%
                \setlength{\textwidth}{\hsize}%
                \vbox{
                    \parskip=\baselineskip
                    \parindent=0bp
                    #2
                }%
                \kern\columnsep%
            }%
            \kern\columnsep%
        }%
    }%
    \setbox\bkgdbox\vbox{
        \color{#1}
        \hrule width  \wd\contentbox %
               height \ht\contentbox %
               depth  \dp\contentbox
        \color{black}
    }%
    \wd\bkgdbox=0bp%
    \vbox{\hbox to \hsize{\box\bkgdbox\box\contentbox}}%
    \vskip\baselineskip%
}
\date{}
\begin{document}


\title{Stationary solutions of the Navier--Stokes--Fourier system
in planar domains with impermeable boundary}

\author{I.~S.~Ciuperca\thanks{Universit\'e de Lyon, Universit\'e Lyon 1, CNRS,
Institut Camille Jordan UMR 5208, 43 boulevard du 11 novembre 1918,
F--69622 Villeurbanne Cedex, France
({\tt  ciuperca@math.univ-lyon1.fr})}
\and E.~Feireisl\thanks{Institute of Mathematics of the Academy of Sciences of the Czech Republic,
\v Zitn\' a 25, CZ--115 67 Praha 1, Czech Republic
\& Institute of Mathematics, TU Berlin, Strasse des 17
Juni, Berlin, Germany ({\tt  feireisl@math.cas.cz})}
\and M.~Jai\thanks{Universit\'e de Lyon, CNRS, INSA de Lyon
Institut Camille Jordan UMR 5208, 20 Avenue A. Einstein, F--69621 Villeurbanne, France
({\tt mohammed.jai@insa-lyon.fr, apetrov@math.univ-lyon1.fr})}
\and A.~Petrov$^{\ddagger}$}

\pagestyle{myheadings}
\thispagestyle{plain}
\markboth{I.~S.~Ciuperca, E.~Feireisl, M.~Jai, A.~Petrov}
{\small{The Navier--Stokes--Fourier system with impermeable boundary}}

\maketitle

\begin{abstract}

The existence of weak solutions to the Navier--Stokes--Fourier system describing the
stationary states of a compressible, viscous, and heat conducting fluid in bounded
$2\mathrm{D}$-domains is shown under fairly general and physically relevant 
constitutive relations.
The equation of state of a real fluid is considered, where the admissible range of
density is confined to a bounded interval (hard sphere model). The transport
coefficients depend on the temperature in a general way including both gases and
liquids behavior. The heart of the paper are new {\it a priori} bounds resulting from
Trudinger--Moser inequality.

\end{abstract}

{\bf Keywords:} Navier--Stokes--Fourier system, stationary solution,
inhomogeneous boundary conditions, Trudinger--Moser inequality

\tableofcontents

\section{Introduction}
\label{I}

Let $\Omega \subset \R^2$ be a bounded domain with smooth boundary.
Stationary states of a compressible, viscous, and heat conducting fluid contained
in $\Omega$ are described through the phase variables - the density $\vr \eqldef\vr(x)$,
the velocity field $\vu \eqldef \vu(x)$,
and the (absolute) temperature $\vt \eqldef \vt(x)$ - satisfying
the \emph{Navier--Stokes--Fourier system}:
\begin{subequations}
\label{I1}
\begin{align}
\label{I1a}
&\Div (\vr \vu) = 0,\\
\label{I1b}
&\Div (\vr \vu \otimes \vu) + \Grad p(\vr, \vt) = \Div (\mathbb{S}(\vt, \Grad \vu)) + \vr \vc{f},\\
\label{I1c}
&\Div \Bigl( \Bigr[ \frac{1}{2} \vr |\vu|^2 + \vr e(\vr, \vt) \Bigl] \vu \Bigr) +
\Div (p(\vr, \vt) \vu) + \Div (\vc{q} (\vt, \Grad \vt)) = \Div(\mathbb{S}(\vt, \Grad \vu)\vu)
+ \vr \vc{f} \cdot \vu + \vr G.
\end{align}
\end{subequations}
We suppose the existence of the entropy $s\eqldef s(\vr, \vt)$ related to the internal
energy $e(\vr,\vt)$ and the pressure $p(\vr, \vt)$
through \emph{Gibbs' equation}:
\begin{equation}
\label{I2}
\vt D s = De + p D \Bigl( \frac{1}{\vr} \Bigr).
\end{equation}
The viscous stress tensor $\mathbb{S}$ is given by \emph{Newton's rheological law}:
\begin{equation}
\label{I3}
\mathbb{S}(\vt, \Grad \vu)\eqldef\mu(\vt) [ \Grad \vu + (\Grad \vu)^{\tra} - \Div \vu \mathbb{I}]
+ \lambda(\vt) \Div \vu \mathbb{I},
\end{equation}
while the heat flux $\vc{q}\eqldef \vc{q}(\vr, \Grad \vt)$ obeys \emph{Fourier's law}:
\begin{equation} \label{I4}
\vc{q} \eqldef - \kappa (\vt) \Grad \vt.
\end{equation}
Here \((\cdot)^{\tra}\) and  \( \mathbb{I}\) denote the transpose of a tensor and the identity matrix,
respectively.
The functions $\vc{f}\eqldef \vc{f}(x)$ and $G\eqldef G(x)$ represent the external
force and external heat source, respectively.
The problem is closed by the set of boundary conditions:
\begin{subequations}
\label{I5-6}
\begin{align}
\label{I5}
&\vu_{|_{\partial \Omega}} = \vuB\quad\text{and}\quad\vuB \cdot \vc{n}_{|_{\partial \Omega}} = 0,\\&
\label{I6}
\vQ \cdot \vc{n}_{|_{\partial \Omega}}= L (\vt - \vtB)_{|_{\partial \Omega}}\quad\text{with}\quad L > 0,
\end{align}
\end{subequations}
where $\vtB$ is a prescribed ``threshold'' temperature 
The interested reader may consult the monograph by Galavotti \cite{GALL} or
\cite[Chapter 1]{FeNo6a} for physical background of the
problem \eqref{I1}--\eqref{I5-6}.

Our goal is to show the existence of admissible weak solutions to the
Navier--Stokes--Fourier
system under fairly general conditions imposed on the constitutive relations.
Following \cite{CiFeJaPe}, we consider the equation of state (EOS) describing real fluids:
\begin{equation*}
p(\vr, \vt) \to \infty \quad\mbox{as}\quad \vr \to \Ov{\vr} \quad\text{with}\quad\Ov{\vr} > 0 
\ \mbox{- a positive constant.}
\end{equation*}
In particular for gases, the EOS is usually written in the form
\begin{equation*}
p(\vr, \vt) = \vr \vt (B_0 + B_1(\vr, \vt)),
\end{equation*}
where the term $B_1$ represents the deviation from the standard
\emph{Boyle--Mariotte law} active in the degenerate area. Accordingly,
we focus on EOS that can be written in the following form
\begin{equation}
\label{I9}
p(\vr, \vt) = \vr \vt h(\vr),\quad h(0) = h_0 > 0,\quad h(\vr) \to \infty \quad \mbox{as}\quad \vr \to \Ov{\vr}.
\end{equation}
Note that Kolafa EOS \cite{KLM04AEHS} as well as
Carnahan--Starling EOS \cite{CarSta69EOS} can be written as \eqref{I9}.
In accordance with Gibbs' relation \eqref{I2}, the associated internal energy $e$
is a function of the temperature only.
Here we suppose
\begin{equation}
\label{I8}
e(\vr, \vt)\eqldef c_v \vt \quad\text{with}\quad c_v > 0.
\end{equation}
More general EOS can be handled by the same approach. This issue is discussed briefly
in the concluding part.

The available literature concerning stationary states of the Navier--Stokes--Fourier
system is rather limited. Besides the results
concerning smooth solutions arising as small perturbations of known
static states, see Piasecki and Pokorn\' y \cite{PiP14SSNF},
Plotnikov, Ruban and Sokolowski \cite{PRS08ICNS, PRS09IBTE},
there is a series of papers by Novotn\' y,
Pokorn\' y, and their collaborators concerning
the existence of weak solutions for problems with large data (external forces),
\cite{NoS04MTCF}. The main
novelties achieved in the present paper compared to the above
mentioned results may be summarized as follows:
\begin{itemize}

\item General EOS of the form \eqref{I9} can be handled. In particular, the pressure vanishes
for $\vt \to 0$, which is particularly relevant to gases. There is no need to add
a "cold pressure'' component independent of $\vt$.

\item The class of transport coefficients includes
\begin{equation}
\label{I10}
\mu (\vt) \approx (1 + \vt^\alpha), \quad \lambda(\vt) \approx (1 + \vt^\alpha),\quad
\kappa (\vt) \approx (1 + \vt^\alpha),\quad
0 \leq \alpha < 1.
\end{equation}
This is relevant for both gases $\alpha = \frac{1}{2}$ and liquids $\alpha = 0$.
Moreover, they are all of the same order that corresponds
to finite Prandtl number.

\item The inhomogeneous boundary conditions for the velocity \eqref{I5} are included.

\end{itemize}

The main new ingredient of our analysis is the Trudinger--Moser inequality available
for Sobolev functions in $\W^{1,2}(\Omega)$
if $\Omega \subset \R^2$ is a planar domain. The key point is the estimate of the temperature
in the form
\begin{equation}
\label{I11}
\intO{\exp( \omega |\log(\vt)|^2)} < \infty \ \mbox{for certain}\ \omega > 0
\end{equation}
resulting from boundedness of the associated entropy production rate.
Obtaining \eqref{I11}, however, is not completely straightforward
due to the presence of external driving represented by the non--homogeneous
boundary condition \eqref{I5}. Relation \eqref{I11} is obtained via a non--standard
compactness argument based on the possibility of extending the field $\Ov{\vu}$ in $\Omega$
with
sufficiently small norm. Compactness of the density is then obtained by a combination
of the method proposed by Lions \cite{LI4}
based on the monotonicity of the pressure, and Commutator Lemma
originally introduced in \cite{EF71}
to handle the time dependent viscosity coefficients.
The reader is also refered to \cite[Lemma 3.6, p.~100]{FeNo6a}.

The paper is organized as follows. In Section \ref{M}, we collect the necessary
preliminary material, formulate principal hypotheses, and state the main result.
The existence proof follows a
multi--level approximate scheme introduced in Section \ref{A}. It consists in:
\begin{itemize}

\item introducing artificial viscosity to regularize the equation of
continuity \eqref{I1a} (small parameter
$\ep$);

\item discretizing the momentum equation \eqref{I1b}
by means of a Galerkin approximation (dimension $N$ of
the approximate space);

\item replacing the total energy balance \eqref{I1c}
by the internal energy equation (small parameter
$\delta > 0$ to augment viscosity and thermal conductivity);

\item truncating the singular pressure (truncation parameter $R$).

\end{itemize}
In Section \ref{U}, we establish uniform bounds on the family of approximate
solutions noting that their existence can be shown in a manner similar to \cite{NovPok11NSFG}.
This amounts to deriving the associated entropy balance, the validity of which can be seen as an
admissibility condition imposed on the class of weak solutions. In Section \ref{N},
we perform the limit in the Galerkin approximation. At this level, the internal energy
equation is replaced by the total energy balance, and the system is augmented by the entropy
inequality. In Section \ref{R}, we derive the pressure estimates based on the application of
the so--called Bogovskii operator and
then relax the truncation. As a result, the density here and hereafter is bounded
above by $\Ov{\vr}$. In Section \ref{E},
we perform the vanishing viscosity limit in the equation of continuity.
This a delicate but nowadays rather well understood process,
where Lions' method \cite{LI4} based on compactness of the effective
viscous flux is combined with the commutator technique introduced
in \cite{EF71}. Finally, in Section \ref{D}, we remove the regularizing terms
depending on a small parameter $\delta$. In particular,
we perform in full generality the estimates leading to the crucial bound \eqref{I11}.
The paper is concluded by a short discussion in Section \ref{C}.

\section{Main result}
\label{M}

We introduce the principal hypotheses on the data and state our main result.
Here and hereafter, we use the symbol
\begin{equation}
\begin{aligned}
&a \aleq b \ \mbox{if there exists a constant}\ c > 0 \ \mbox{such that}\ a \leq cb\\
&a \approx b \ \mbox{if}\ a \aleq b \ \mbox{and}\ b \aleq a.
\end{aligned}
\end{equation}
For $h \in \L^1(\Omega)$, we denote its integral mean by
\begin{equation*}
\{ h \}_m \eqldef \frac{1}{|\Omega|} \intO{ h }.
\end{equation*}
We also use the symbol $c_D$ to denote a generic positive constant depending only
on the data (domain, boundary
conditions, constitutive relations). When there is no confusion,
we will use simply the notation \(\X(\Omega)\)
instead of  \(\X(\Omega;\Y)\) where \(\X\) is a functional space and \(\Y\) a
vectorial space.

\subsection{Constitutive equations, external forces}

The given external fields $\vc{f}$, $G$, $\Ov{\vt}$ satisfy the following assumptions:
\begin{equation}
\label{M1}
\vc{f} \in \L^\infty(\Omega; \R^2), \quad G \in \L^\infty(\Omega),\ G \geq 0, \quad \Ov{\vt} \in
\L^\infty(\partial \Omega),\quad
{\rm ess}\inf_{\partial \Omega} \Ov{\vt} > 0.
\end{equation}
We consider the pressure in the following form: there exists a constant \(\bar{\vr}>0\) such that
\begin{subequations}
\label{M2}
\begin{align}
&p(\vr, \vt) \eqldef \vr \vt h(\vr)\ \text{where}\
h \in \C^0[0, \bar{\vr}) \cap \C^1(0, \bar{\vr}), \quad h(0) = h_0 > 0, \quad h'(\vr) \geq 0,\\
&\lim_{\vr \to \Ov{\vr}} h(\vr) = \infty.
\end{align}
\end{subequations}
In accordance with Gibbs' relation \eqref{I2}, the internal energy is taken in the form:
\begin{equation}
\label{M3}
e(\vr, \vt) \eqldef c_v \vt \ \text{with}\ c_v > 0,
\end{equation}
The transport coefficients $\mu$, $\lambda$ and $\kappa$ are continuously
differentiable functions of the temperature satisfying
\begin{equation}
\label{M4}
\mu(\vt) \approx (1 + \vt^\alpha), \quad
\lambda(\vt) \approx (1 + \vt^\alpha),\quad \kappa(\vt) \approx (1 + \vt^\alpha)\ \text{with}\
0 \leq \alpha < 1.
\end{equation}
The boundary condition \eqref{I5} are determined via a field $\Ov{\vu}$ satisfying
\begin{equation}
 \label{M4bis}
\Ov{\vu} \in \W^{1, \infty}(\Omega; \R^2), \quad \Div (\Ov{\vu}) = 0, \quad
\Ov{\vu} \cdot \vc{n}_{|_{\partial \Omega}} = 0.
\end{equation}

\subsection{Weak formulation}

Let \(M\) be such that \(0<M<|\Omega|\bar{\vr}\) and denote by \(\vr_M\eqldef \frac{M}{|\Omega|}\), 
\(0 < \vr_M<\bar{\vr}\).
Let $s\eqldef s(\vr, \vt)$ be the entropy derived from \eqref{M2}, \eqref{M3}
via Gibbs' relation \eqref{I2}, leading to
\begin{equation}
\label{M5}
s(\vr, \vt) = c_v \log(\vt) - \int_{\vr_M}^\vr \frac{h(z)}{z} {\rm d}z.
\end{equation}
The weak formulation of the Navier--Stokes--Fourier system \eqref{I1}, with the
boundary conditions \eqref{I5} and \eqref{I6}
reads:
\begin{itemize}
\item {\bf Equation of continuity}
\begin{equation}
\label{W1}
\intO{ \vr \vu \cdot \Grad \varphi } = 0 \ \mbox{for any}\ \varphi \in \W^{1,\infty}(\Omega).
\end{equation}

\item {\bf Momentum balance}
\begin{subequations}
\begin{align}
\label{W2}
&\intO{ \Bigl[ \vr \vu \otimes \vu : \Grad \bfphi + p(\vr, \vt) \Div \varphi \Bigr] } =
\intO{ \mathbb{S}(\vt, \Grad \vu) : \Grad \bfphi } - \intO{ \vr \vc{f} \cdot \bfphi }\\&\notag
\text{ for any } \bfphi \in \W^{1, \infty}_0(\Omega; \R^2),\\& \label{W3}
\vu_{|_{\partial \Omega}} = \Ov{\vu}_{|_{\partial \Omega} }
\ \mbox{in the sense of traces.}
\end{align}
\end{subequations}

\item
{\bf Total energy balance}
\begin{equation}
\label{W4}
\begin{aligned}
&\intO{ \Big( \frac{1}{2} \vr |\vu|^2 + \vr e(\vr, \vt)    \Big) \vu \cdot \Grad \varphi }
+ \intO{p (\vr, \vt) \vu \cdot \Grad\varphi   } + \intO{ \vc{q} \cdot \Grad \varphi }
\\&- \intO{ \mathbb{S} (\vt, \Grad \vu)\vu \cdot \Grad \varphi  }
= \int_{\partial \Omega} L (\vt - \Ov{\vt})
\varphi \ {\rm dS}  - \intO{ \varphi \vr G   } - \intO{\varphi \vr \vc{f} \cdot \vu}
\\&+ \intO{ \vr
 (\vu \otimes \vu) : \Grad (\varphi \Ov{\vu})}
+ \intO{ p (\vr, \vt) \Div (\varphi \Ov{\vu}) }
\\& + \intO{\vr \vc{f} \cdot
(\varphi \Ov{\vu}) } - \intO{ \mathbb{S} (\vt, \Grad \vu) : \Grad (\varphi \Ov{\vu}) }
 \end{aligned}
\end{equation}
for any $\varphi \in \W^{1,\infty}({\Omega})$.  Notice that the total energy balance is obtained
by multiplying \eqref{I1b} and \eqref{I1c} by \(\varphi\Ov{\vu}\) and \(\varphi\), respectively.

\item {\bf Entropy inequality}
\begin{equation}
\label{W5}
\begin{aligned}
\intO{ \varphi
\frac{1}{\vt} &\Bigl[ \mathbb{S} (\vt, \Grad \vu) : \Grad \vu + \frac{ \kappa (\vt) }{\vt}
|\Grad \vt |^2 \Bigr] } + \intO{ \frac{\vr}{\vt} G \varphi } \\
&\leq  \int_{\partial \Omega} \varphi L \Bigl( 1 - \frac{\Ov{\vt}}{\vt} \Bigr) {\rm dS} -
\intO{ \Bigl( \frac{\vQ (\vt, \Grad \vt) }{\vt} \Bigr) \cdot \Grad \varphi }
- \intO{( \vr  s(\vr, \vt)  \vu ) \cdot \Grad \varphi }
\end{aligned}
\end{equation}
for any $\varphi \in \W^{1, \infty}(\Omega)$ satisfying $\varphi \geq 0$.

\end{itemize}

\subsection{Existence of weak solutions}

We are ready to state our main result on the existence of weak solutions to the
Navier--Stokes--Fourier system.

\begin{Theorem} \label{TM1}

Let $\Omega \subset \R^2$ be a bounded domain of class $\C^{2 + \nu}$.
Let the data $\vc{f}$, $G$, $\Ov{\vt}$,
and $\Ov{\vu}$ satisfy the hypotheses \eqref{M1} and \eqref{M4bis}.
Let the pressure $p$ and the internal energy $e$ satisfy \eqref{M2} and
\eqref{M3}. Let the transport coefficients $\mu$, $\lambda$, and $\kappa$
be continuously differentiable functions of $\vt$ satisfying
\eqref{M4}, with
\begin{equation}
0 \leq \alpha < 1.
\end{equation}
Then for any $M > 0$, the Navier--Stokes--Fourier system \eqref{W1}--\eqref{W5}
admits a solution $[\vr, \vu, \vt]$ satisfying
\begin{equation}
M = \intO{\vr}.
\end{equation}
The solution belongs to the class:
\begin{equation*}
\begin{aligned}
\vr &\in \L^\infty(\Omega), \quad 0 \leq \vr < \Ov{\vr} \ \mbox{a.e in} \ \Omega,\\
\vu &\in \W^{1,r}(\Omega; \R^2) \ \mbox{for any}\ 1 \leq r < 2, \\
\vt^\beta &\in \W^{1,r}(\Omega) \ \mbox{for any}\ 1 \leq r < 2, \ \beta \in \R,\quad \vt > 0
\ \mbox{a.e in}\ \Omega,\quad \log(\vt) \in \W^{1,2}(\Omega).
\end{aligned}
\end{equation*}
\end{Theorem}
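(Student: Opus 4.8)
The plan is to obtain $[\vr,\vu,\vt]$ as the limit of the four-level approximation scheme announced in the introduction, with parameters $\ep$ (artificial viscosity added to \eqref{I1a}), $N$ (dimension of the Galerkin space for \eqref{I1b}), $\delta>0$ (extra dissipation in $\mathbb{S}$ and in the heat flux, together with the replacement of \eqref{I1c} by the internal energy balance), and $R$ (truncation of the singular factor $h$ in the pressure). At the bottom level, existence of approximate solutions is obtained by a Leray--Schauder degree / fixed-point argument in the spirit of \cite{NovPok11NSFG}: for $\vu$ frozen in the finite-dimensional space one solves the regularized continuity equation for $\vr$ and the uniformly elliptic (thanks to the $\delta$-term) internal energy equation for $\vt$, and then closes the system through the Galerkin projection of \eqref{W2}. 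All constructions at this stage are by now standard.

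The core of the proof are the uniform a priori bounds. Testing the momentum equation by $\vu-\Ov{\vu}$ and manipulating the internal energy balance one derives the entropy balance, which controls $\intO{ \vt^{-1}\,\mathbb{S}(\vt,\Grad\vu):\Grad\vu }$ and $\intO{ \vt^{-2}\kappa(\vt)|\Grad\vt|^2 }$ by the external data plus the boundary contribution $\int_{\partial\Omega} L\bigl(1-\Ov{\vt}/\vt\bigr)\varphi$. In particular $\Grad\log\vt\in \L^2(\Omega)$, hence $\log\vt\in\W^{1,2}(\Omega)$, and the Trudinger--Moser inequality, valid because $\Omega\subset\R^2$, upgrades this to \eqref{I11}; this in turn yields $\vt^\beta\in\L^p(\Omega)$ for every $\beta\in\R$, $1\le p<\infty$, and $\vt^\beta\in\W^{1,r}(\Omega)$ for $1\le r<2$. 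The subtle point is that, to genuinely close the energy estimate, the kinetic term $\tfrac12\vr|\vu|^2\vu$ together with the pressure work and the stress work generated by $\varphi\Ov{\vu}$ in \eqref{W4} must be absorbed. Here one exploits that, since $\Ov{\vu}$ is tangent to $\partial\Omega$, the boundary datum can be extended to a divergence-free field supported in a thin neighbourhood of $\partial\Omega$ with arbitrarily small $\W^{1,\infty}$-norm; choosing this norm small enough lets the offending terms be absorbed by the dissipative terms on the left-hand side. Together with the monotonicity $h'\ge0$ and the hard-sphere bound $\vr<\Ov{\vr}$, this closes the bounds for $\vr$, $\vr\vt h(\vr)$, $\vu$ and $\vt$ uniformly in all the parameters.

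With the uniform bounds in hand we pass to the limit successively. Letting $N\to\infty$ we recover the total energy balance \eqref{W4} and, since the entropy production is only weakly lower semicontinuous, the entropy inequality \eqref{W5}; the $\ep$-regularized continuity equation survives. Next, by means of the Bogovskii operator we derive pressure estimates slightly better than $\L^1$ and let $R\to\infty$, after which the singularity of $h$ at $\Ov{\vr}$ forces $0\le\vr<\Ov{\vr}$ a.e. Then comes the vanishing viscosity limit $\ep\to0$ in the continuity equation: the delicate step is strong convergence of $\vr$, obtained by Lions' effective viscous flux identity \cite{LI4} combined with the commutator lemma of \cite{EF71} (needed because $\mu(\vt)$ and $\lambda(\vt)$ depend on the varying temperature). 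Finally, letting $\delta\to0$, we repeat the a priori estimates --- in particular the Trudinger--Moser bound \eqref{I11} --- now without the help of the artificial $\delta$-dissipation, pass to the limit in all the integral identities, and arrive at a solution of \eqref{W1}--\eqref{W5} in the asserted regularity class with $M=\intO{\vr}$.

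I expect the main obstacle to be precisely the closure of the energy and entropy estimates under the inhomogeneous boundary condition \eqref{I5}: absorbing the boundary term and the $\Ov{\vu}$-generated convective and stress terms requires both the small-extension construction for $\Ov{\vu}$ and a careful interpolation between the Trudinger--Moser control of $\log\vt$ and the $\L^q$-control of $\vr\vt$, and this must moreover be carried out uniformly down to $\delta\to0$. A second, more technical, difficulty is the density compactness in the vanishing viscosity limit in the presence of temperature-dependent transport coefficients.
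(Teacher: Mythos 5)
Your overall architecture coincides with the paper's: the same four-parameter approximation ($\ep$, $N$, $R$, $\delta$), the same order of limit passages, the entropy balance plus Trudinger--Moser for the temperature, Bogovskii for the pressure, and Lions' effective viscous flux combined with the commutator lemma for the density. However, there is one genuine gap, and it sits exactly at the step the paper identifies as its main new ingredient.

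You propose to handle the inhomogeneous boundary condition by extending $\Ov{\vu}$ to a divergence-free field supported near $\partial\Omega$ \emph{with arbitrarily small $\W^{1,\infty}$-norm} and then absorbing the offending terms into the dissipation. No such extension exists: any extension has trace $\Ov{\vu}$ on $\partial \Omega$, so its $\L^\infty(\Omega)$-norm --- and a fortiori its $\W^{1,\infty}(\Omega)$-norm --- is bounded below by $\|\Ov{\vu}\|_{\L^\infty(\partial\Omega)}$, which is a fixed positive quantity in the nontrivial case. What the paper's Lemma \ref{LD1} actually provides is an extension $\Ov{\vu}_\omega$ that is small only in $\L^q(\Omega)$ for \emph{finite} $q$ (concentrated in a thin boundary layer), at the price of a gradient norm $\| \Ov{\vu}_\omega \|_{\W^{1,p}}$ controlled by a constant $c(\omega,p,q)$ that is large. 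Consequently a direct absorption is impossible, and the argument has to exploit the precise structure of the estimate \eqref{D7}: the large norm $\|\Ov{\vu}_\omega\|_{\W^{1,q}}$ multiplies only strictly sublinear powers $\|\vtd\|^{1/2}_{\L^s}$, $\sqrt{\delta}\,\|\vtd\|^{(a+1)/2}_{\L^s}$ of the temperature, while the linear term $\|\vtd\|_{\L^s}$ carries the small coefficient $\|\Ov{\vu}_\omega\|_{\L^q}$. Even then one does not absorb; one closes the bound via the contradiction/compactness argument of Lemma \ref{DL3}, normalizing by $\int_{\partial\Omega}\vtd\,{\rm dS}$ and invoking the Trudinger--Moser control of $\log\vtd$ (Lemma \ref{DL2}) to pass to the limit in the normalized inequality. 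Without this combination, the uniform-in-$\delta$ bound on $\int_{\partial\Omega}\vtd\,{\rm dS}$ --- and hence \eqref{I11} and the entire $\delta\to0$ limit --- does not close. A minor additional inaccuracy: the hard-sphere bound $0\le\vr<\Ov{\vr}$ is not available at the level of the initial a priori estimates; it is only produced after the limit $R\to\infty$ via the pressure bounds \eqref{R10}, so it cannot be used to ``close the bounds uniformly in all the parameters'' from the outset.
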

The rest of the paper is devoted to the proof of Theorem \ref{TM1}.

\section{Approximate system}
\label{A}

Let us define the following cut--off function:
\begin{equation*}
T \in \C^\infty(\R),\quad T(\vr)
\begin{cases}
=\vr & \mbox{if}\ 0 \leq \vr \leq  \Ov{\vr},\\
\in\,]-2\bar{\vr},2\bar{\vr}[ & \mbox{otherwise}.
\end{cases}
\end{equation*}
The solutions will be obtained through a multi--level approximate system including
regularization of various types:
\begin{subequations}
\label{A1-2}
\begin{align}
\label{A1}
&\Div (\vr \vu) = \ep \Delta \vr - \ep( \vr - \vr_M),\quad \Grad \vr \cdot \vc{n} _{|_{\partial \Omega}} = 0,\\
\label{A2}
&\intO{ \Bigl[ \frac{1}{2} T(\vr) ( \vu \otimes \vu : \Grad \bfphi -
\vu \cdot \Grad \vu \cdot \bfphi) + p_R (\vr, \vt)  \Div \bfphi \Bigr] }
= \intO{ \Bigl[ \mathbb{S}_\delta (\vt, \Grad \vu) : \Grad \bfphi - T(\vr) \vc{f} \cdot \bfphi \Bigr] }
\end{align}
\end{subequations}
for any $\bfphi \in X_N$, $(\vu - \vuB) \in X_N$;
\begin{subequations}
\label{A3-4}
\begin{align}
\label{A3}
&\Div (\vr e_R(\vr, \vt) \vu ) + \Div \vQ_\delta(\vt, \Grad \vt) =
\mathbb{S}_\delta (\vt, \vu) : \Grad \vu - p_R(\vr, \vt) \Div \vu
+ \vr (G + \ep),\\&
\label{A4}
\vQ_{\delta} \cdot \vc{n}_{|_{\partial \Omega}}
= L [ \vt -  \Ov{\vt}],
\end{align}
\end{subequations}
with
\begin{equation}
\label{A5}
\mathbb{S}_\delta (\vt, \nabla\vu)
= \mu_\delta (\vt) \bigl( \Grad \vu + (\Grad \vu)^{\tra} - \Div \vu \mathbb{I} \bigr) +
\lambda_\delta (\vt) \Div \vu \mathbb{I}\quad\text{and}\quad
\vQ_\delta (\vt, \Grad \vt) = - \kappa_\delta (\vt) \Grad \vt.
\end{equation}
Furthermore, we have
\begin{equation}
\label{A5b}
\{\vr\}_m=\vr_M.
\end{equation}
There are four parameters: the dimension on the Galerkin
approximation space $N$, the artificial viscosity (mass transport)
coefficient $\ep$, the truncation parameter $R$, and perturbations
by regularizing terms depending on $\delta$.

The specific form of the convective term in \eqref{A2} is borrowed
form Novotn\' y and Pokorn\' y \cite{NovPok11NSFG}.
Moreover, using the same method
as in \cite{NovPok11NSFG} we can show that the approximate system \eqref{A1-2}--\eqref{A5}
admits regular (strong) solution whenever
\begin{subequations}
\begin{align}
&\ep > 0,\quad N < \infty,\quad R < \infty,\quad \delta > 0,\\&
\label{H1}
\kappa_\delta(\vt) = \kappa(\vt) + \delta( \vt^\beta + \vt^{-1}),\quad
\beta \geq 2.
\end{align}
\end{subequations}
and also
\begin{equation}
\label{H4}
\mu_\delta(\vt) \eqldef \mu(\vt) + \delta \vt^a\quad\text{and}\quad
\lambda_\delta(\vt) \eqldef \lambda(\vt) + \delta \vt^a
\ \mbox{for some}\ \alpha < a < 1.
\end{equation}
Moreover there exist \(\underline{\vr}\) and \( \underline{\vt}\) (depending on
\(\varepsilon\), \(N\), \(R\) and \(s\)) such that
\begin{equation}
\label{A6}
0 < \underline{\vr} \leq \vr\quad\text{and}\quad 0 < \underline{\vt}\leq \vt\ \mbox{in}\ \Omega,
\end{equation}
see \cite{CiFeJaPe}, and \cite{FeNo18} for details. The truncated pressure is defined as
\begin{subequations}
\label{H3}
\begin{align}
p_R &\eqldef \vr \vt h_R(\vr)\quad\text{and}\quad e_R \eqldef e \eqldef c_v \vt,\\
h_R(\vr) &\eqldef
\begin{cases}
h(\vr)  &\mbox{if}\ 0 \leq \vr < \Ov{\vr} - \frac{1}{R},\\
h \bigl( \Ov{\vr} - \frac{1}{R} \bigr) + h'\bigl( \Ov{\vr} - \frac{1}{R} \bigr)
\bigl( \vr - \bigl( \Ov{\vr} - \frac{1}{R} \bigr) \bigr) & \mbox{otherwise.}
\end{cases}
\end{align}
\end{subequations}
Accordingly, we may define the entropy $s_R$ by the following identity
\begin{equation*}
s_R(\vr,\vt)\eqldef c_r\log(\vt)-\int_{\vr_{M}}^{\vr}\frac{h_R(z)}{z}\,\mathrm{d}z,
\end{equation*}
such that the following Gibbs' relation is satisfied
\begin{equation} \label{H2}
\vt D s_{R} = D e_R + p_R D \Bigl( \frac{1}{\vr} \Bigr).
\end{equation}

\section{Uniform bounds on approximate solutions}
\label{U}

Our goal is to establish uniform bounds for the approximate solutions solving
\eqref{A1-2}--\eqref{A5}.

\subsection{Entropy equation}

As $\vt > 0$, the internal energy balance \eqref{A3} divided on $\vt$ reads:
\begin{equation*}
\begin{aligned}
&\frac{1}{\vt} \Div (\vr \vu) e_R(\vr, \vt) + \frac{1}{\vt} \vr \Grad e_R \cdot \vu
- \frac{1}{\vt} \Div ( \kappa_\delta (\vt) \Grad \vt)
\\&= \frac{1}{\vt} \mathbb{S}_\delta(\vt, \Grad \vu): \Grad \vu -
\frac{1}{\vt} p_R(\vr, \vt) \Div \vu + \frac{\vr}{\vt}(G + \ep),
\end{aligned}
\end{equation*}
in other words,  we have
\begin{equation}
\label{A7}
\begin{aligned}
&\frac{1}{\vt} \Bigl[ \mathbb{S}_\delta (\vt, \Grad \vu) : \Grad \vu + \frac{ \kappa_\delta (\vt) }{\vt}
|\Grad \vt |^2 \Bigr] + \frac{\vr}{\vt}(G + \ep) \\ &= \Div \Bigl( \frac{\vQ_\delta (\vt, \Grad \vt) }{\vt} \Bigr)
+ \frac{1}{\vt} \Div (\vr \vu) e_R(\vr, \vt) + \frac{1}{\vt} \vr \Grad e_R \cdot \vu
+ \frac{1}{\vt} p_R(\vr, \vt) \Div \vu \\
&= \Div \Bigl( \frac{\vQ_\delta (\vt, \Grad \vt) }{\vt} \Bigr)
 + \vr \Bigl( \frac{1}{\vt} \Grad e_R
 - \frac{1}{\vr^2 \vt} p_R(\vr, \vt)\Grad \vr \Bigr) \cdot \vu
+\frac{1}{\vt} \Div (\vr \vu) \Bigl( e_R(\vr, \vt) + \frac{p_R(\vr, \vt)}{\vr} \Bigr).
\end{aligned}
\end{equation}
In view of hypothesis \eqref{H2}, the relation \eqref{A7} can be written in the following form:
\begin{equation}
\begin{aligned}
&\frac{1}{\vt} \Bigl[ \mathbb{S}_\delta (\vt, \Grad \vu) : \Grad \vu + \frac{ \kappa_\delta (\vt) }{\vt}
|\Grad \vt |^2 \Bigr] + \frac{\vr}{\vt}(G + \ep)\\
&= \Div \Bigl( \frac{\vQ_\delta (\vt, \Grad \vt) }{\vt}\Bigr)
 + \Div( \vr  s_R(\vr, \vt)  \vu) + \Div (\vr \vu) \Bigl( \frac{e_R(\vr, \vt)}{\vt} -
s_R(\vr, \vt) + \frac{p_R(\vr, \vt)}{\vr \vt} \Bigr).
\end{aligned}
\end{equation}
Finally, by using \eqref{A1}, we may conclude
\begin{equation}
\label{A8}
\begin{aligned}
&\frac{1}{\vt} \Bigl[ \mathbb{S}_\delta (\vt, \Grad \vu) : \Grad \vu + \frac{ \kappa_\delta (\vt) }{\vt}
|\Grad \vt |^2 \Bigr] + \frac{\vr}{\vt}(G + \ep)
= \Div \Bigl( \frac{\vQ_\delta (\vt, \Grad \vt) }{\vt} \Bigr)
 + \Div \Big( \vr  s_R(\vr, \vt)  \vu \Big)\\&
+ \ep( \Delta \vr - ( \vr - \vr_M)) \Bigl( \frac{e_R(\vr, \vt)}{\vt} -
s_R(\vr, \vt) + \frac{p_R(\vr, \vt)}{\vr \vt} \Bigr).
\end{aligned}
\end{equation}
The desired uniform bounds follow by integrating \eqref{A8} over $\Omega$, we get
\begin{equation}
\label{A9}
\begin{aligned}
&\intO{\frac{1}{\vt} \Bigl[ \mathbb{S}_\delta (\vt, \Grad \vu) : \Grad \vu + \frac{ \kappa_\delta (\vt) }{\vt}
|\Grad \vt |^2 \Bigr]}+ L \int_{\partial \Omega} \frac{\Ov{\vt}}{\vt} \ {\rm dS}
+ \ep \intO{ (\vr - \vr_M)\frac{p_R(\vr, \vt)}{\vr \vt} } \\&+ \intO{ \frac{\vr}{\vt}(G + \ep)}
= L |\partial \Omega|
+ \ep \intO{ \Delta \vr  \Bigl( \frac{e_R(\vr, \vt)}{\vt} -
s_R(\vr, \vt) + \frac{p_R(\vr, \vt)}{\vr \vt}\Bigr)}\\&
- \ep \intO{ ( \vr - \vr_M) \Bigl( \frac{e_R(\vr, \vt)}{\vt} -
s_R(\vr, \vt)  \Bigr)}.
\end{aligned}
\end{equation}
Furthermore, using Gibbs' relation \eqref{H2} and the boundary condition
$\Grad \vr \cdot \vc{n}_{|_{\partial \Omega}} = 0$, we find
\begin{equation*}
\begin{aligned}
&\ep \intO{ \Delta \vr  \Bigl( \frac{e_R(\vr, \vt)}{\vt} -
s_R(\vr, \vt) + \frac{p_R(\vr, \vt)}{\vr \vt}\Bigr) }=
- \ep \intO{ \Grad \vr  \cdot \Grad \Bigl( \frac{e_R(\vr, \vt)}{\vt} -
s_R(\vr, \vt) + \frac{p_R(\vr, \vt)}{\vr \vt}\Bigr) }\\
&= - \ep \intO{ \frac{1}{\vr \vt} \frac{\partial p_R}{\partial \vr}(\vr, \vt) |\Grad \vr|^2 }
+ \ep \intO{ \frac{1}{\vt^2} \Bigl[ e_R(\vr, \vt)
+\frac{p_R(\vr, \vt)}{\vr} - \frac{\vt}{\vr} \frac{\partial p_R}{\partial \vt}(\vr, \vt)
\Bigr]  \Grad \vr \cdot \Grad \vt }.
\end{aligned}
\end{equation*}
Consequently, relation \eqref{A9} gives rise to
\begin{equation}
\label{A10}
\begin{aligned}
&\intO{\frac{1}{\vt} \Bigl[ \mathbb{S}_\delta (\vt, \Grad \vu) : \Grad \vu + \frac{ \kappa_\delta (\vt) }{\vt}
|\Grad \vt |^2 \Bigr]} + L \int_{\partial \Omega} \frac{\Ov{\vt}}{\vt} \ {\rm dS}
+ \ep \intO{ (\vr - \vr_M)  \ \frac{p_R(\vr, \vt)}{\vr \vt} } \\
&+  \ep \intO{ \frac{1}{\vr \vt} \frac{\partial p_R}{\partial \vr}(\vr, \vt) |\Grad \vr|^2 }
+ \intO{ \frac{\vr}{\vt} (G + \ep) }\\&= L |\partial \Omega|
+ \ep \intO{ \Bigl[ \frac{e_R(\vr, \vt)}{\vt}
+ \frac{p_R(\vr, \vt)}{\vr \vt} - \frac{1}{\vr} \frac{\partial p_R}{\partial \vt}(\vr, \vt)
\Bigr]  \Grad \vr \cdot \Grad \log( \vt ) }\\&
- \ep \intO{ ( \vr - \vr_M) \Bigl[  \frac{e_R(\vr, \vt)}{\vt} -
s_R(\vr, \vt)\Bigr] }.
\end{aligned}
\end{equation}
Thus, using the specific form \eqref{H3} of the pressure and energy truncations, we may infer that
\begin{equation}
\label{A11}
\begin{aligned}
&\intO{\frac{1}{\vt}\Bigl[\mathbb{S}_\delta (\vt, \Grad \vu) : \Grad \vu + \frac{ \kappa_\delta (\vt) }{\vt}
|\Grad \vt |^2 \Bigr]} + L \int_{\partial \Omega} \frac{\Ov{\vt}}{\vt} \ {\rm dS}
+ \ep \intO{ (\vr - \vr_M) h_R (\vr) }\\& +  \ep \intO{ \Bigl[ h_R'(\vr)+
\frac{h_R(\vr)}{\vr} \Bigr] |\Grad \vr|^2 }
+ \intO{ \frac{\vr}{\vt} (G + \ep) }
= L |\partial \Omega|
+ \ep c_v \intO{   \Grad \vr \cdot \Grad \log( \vt ) }\\&
-\ep \intO{ ( \vr - \vr_M) \Bigl(
\int_{\vr_M}^\vr \frac{ h_R(z) }{z} \ {\rm d}z \Bigr)}
+\ep c_v\intO{(\vr-\vr_M)(\log(\vt)-{\{\log(\vt)\}}_m)}.
\end{aligned}
\end{equation}
Note that, in accordance with \eqref{H2}, the entropy reads
\begin{equation}
\label{A12}
s(\vr, \vt) = c_v \log (\vt) - s_{R,h}(\vr)\quad\text{and}\quad
s'_{R,h} (\vr) = \frac{1}{\vr} h_R(\vr).
\end{equation}
We determine now some uniform bounds based on the entropy. First observe that
\begin{equation*}
h_R'(\vr) + \frac{h_R(\vr)}{\vr} \geq c_D > 0 \ \mbox{independently of}\ R.
\end{equation*}
Returning to \eqref{A11}, we may deduce that
\begin{equation}
\label{A13}
\begin{aligned}
&\intO{\frac{1}{\vt}\Bigl[ \mathbb{S}_\delta (\vt, \Grad \vu) : \Grad \vu + \frac{ \kappa_\delta (\vt) }{\vt}
|\Grad \vt |^2 \Bigr]} + L \int_{\partial \Omega} \frac{\Ov{\vt}}{\vt} \ {\rm dS}  +
\ep \intO{ ( \vr - \vr_M) \Bigl( \int_{\vr_M}^\vr \frac{h_R(z)}{z} \Bigr) {\rm d}z  }\\
&+ \ep \intO{ (\vr - \vr_M)( h_R (\vr) - h_R(\vr_M))}
+  \ep \intO{ \Bigl[ h_R'(\vr) + \frac{h_R(\vr)}{\vr} \Bigr] |\Grad \vr|^2 }
+ \intO{ \frac{\vr}{\vt} (G + \ep) } \leq c_D,
\end{aligned}
\end{equation}
where the constant $c_D$ is independent of the parameters $N$, $R$, $\ep$, and $\delta$.
Notice that
\begin{equation*}
(\vr-\vr_M)\int_{\vr}^{\vr_M}\frac{h(z)}{z}\ {\rm d}z\geq 0
\quad\text{and}\quad
\intO{ (\vr - \vr_M) ( h_R (\vr) - h_R(\vr_M))}\geq 0.
\end{equation*}

\subsection{Trudinger--Moser inequality}
\label{TM}

In this section, we derive rather strong bounds on the temperature based on the
Trudinger--Moser inequality.

\subsubsection{Bounds on the temperature gradient}

Integrating \eqref{A3}  over \(\Omega\) and using \eqref{A5b} and \eqref{M1}, we find
\begin{equation*}
\begin{aligned}
L \int_{\partial \Omega} (\vt - \Ov{\vt} ) {\rm dS}&=
\intO{ \Big[ \mathbb{S}_\delta (\vt, \Grad \vu) : \Grad \vu - p_R(\vr, \vt) \Div \vu \Big] }
+ \intO{\vr (G + \ep)}  \\ &\leq  \intO{\Bigl[ \mathbb{S}_\delta (\vt, \Grad\vu) : \Grad \vu -
p_R(\vr, \vt) \Div \vu\Bigr] }
+ c_D.
\end{aligned}
\end{equation*}
Next, taking $\vu - \Ov{\vu}$ as a test function in the momentum equation \eqref{A2}, we get
\begin{equation*}
\begin{aligned}
&\intO{ \Big[ \mathbb{S}_\delta (\vt, \Grad \vu) : \Grad \vu -
p_R (\vr, \vt)  \Div \vu   \Big]}\\
&= \intO{ \Bigl[ \frac{1}{2} T(\vr) (
\vu \cdot \Grad \vu \cdot \Ov{\vu} - \vu \otimes \vu : \Grad \Ov{\vu}) +
\mathbb{S}_\delta (\vt, \Grad \vu) : \Grad \Ov{\vu} + T(\vr) \vc{f} \cdot (\vu - \Ov{\vu})
\Bigr] },
\end{aligned}
\end{equation*}
and, consequently, we have
\begin{equation}
\label{A14}
\begin{aligned}
L \int_{\partial \Omega} (\vt - \Ov{\vt} ) {\rm dS}
&\leq \intO{ \Bigl[ \frac{1}{2} T(\vr) (
\vu \cdot \Grad \vu \cdot \Ov{\vu} - \vu \otimes \vu : \Grad \Ov{\vu}) +
\mathbb{S}_\delta (\vt, \Grad \vu) : \Grad \Ov{\vu} + T(\vr) \vc{f} \cdot \vu )
\Bigr] }
+ c_{D}\\
&\aleq \Bigl( 1 + \intO{ \Bigl[ | \vu \cdot \Grad \vu | + |\vu|^2 +
|\mathbb{S}_\delta (\vt, \Grad \vu)| \Bigr] } \Bigr).
\end{aligned}
\end{equation}
Our goal is to show that all the integrals on the right--hand side of \eqref{A14} can be controlled by
$\| \vt \|^\ell_{\L^q(\Omega)}$ for some $q< \infty$ and $\ell< 1$. Let us first  observe that
\begin{equation*}
\intO{ |(\vu -\Ov{\vu} ) \cdot \Grad (\vu - \Ov{\vu}) | }
\leq \intO{ |\vu - \Ov{\vu}| \ |\Grad \vu - \Grad \Ov{\vu}|}.
\end{equation*}
Then the Korn's inequality leads to
\begin{equation}
\label{korn}
\| \Grad \vu - \Grad \Ov{\vu} \|_{\L^r(\Omega)} \leq c_K(r)
\| \mathbb{D} \vu - \mathbb{D} \Ov{\vu} \|_{\L^r(\Omega)},\ 1 < r < \infty,
\end{equation}
and the Poincar\' e's inequality  gives
\begin{equation}
\label{porn}
\| \vu - \Ov{\vu} \|_{\L^{s_1}(\Omega)} \leq c_P \| \Grad \vu - \Grad \Ov{\vu}
\|_{\L^r(\Omega)},\ s_1 \leq \frac{2r}{2 - r}\text{ and } s_1=\frac{r}{r-1}.
\end{equation}
Consequently, there is $\frac43 < r < 2$ such that
\begin{equation*}
\intO{ |(\vu -\Ov{\vu} ) \cdot \Grad (\vu - \Ov{\vu}) | } \aleq
\| \mathbb{D} \vu - \mathbb{D} \Ov{\vu} \|_{\L^r(\Omega)}^2,
\end{equation*}
whence
\begin{equation}
\label{A15}
\intO{ |\vu \cdot \Grad \vu | } \aleq (1 + \| \mathbb{D} \vu \|_{\L^r(\Omega;\R^4)}^2) \ \mbox{for some}
\ \frac43< r < 2.
\end{equation}
On the other hand, the entropy estimates \eqref{A13} together with hypothesis \eqref{H4} yield to
\begin{equation}
\label{zz}
\Bigl\|\frac1{\sqrt\vt}\mathbb{D}\vu\Bigr\|^2_{\L^2(\Omega; \R^4)}+
\| \sqrt{\delta} \vt^{\frac{a - 1}{2}} \mathbb{D} \vu\|^2_{\L^2(\Omega; \R^4)} \leq c_{D}.
\end{equation}
According to H\" older's inequality, we obatin
\begin{equation}
\label{A16}
\| \mathbb{D} \vu \|_{\L^r(\Omega;\R^4)} = \frac{1}{\sqrt{\delta}}\| \vt^{\frac{1- a}{2}}
\sqrt{\delta} \vt^{\frac{a-1}{2}} \mathbb{D} \vu \|_{\L^r(\Omega;\R^4)}
\aleq \frac{1}{\sqrt{\delta}} \| \vt^{\frac{1- a}{2}}\|_{\L^s(\Omega)}
\aleq \frac{1}{\sqrt{\delta}} \| \vt\|_{\L^s(\Omega)}^{\frac{1- a}{2}}, \
\frac{1}{s} + \frac{1}{2} = \frac{1}{r}.
\end{equation}
Combining \eqref{A15} and \eqref{A16}, we get
\begin{equation}
\label{A18}
\intO{ |\vu \cdot \Grad \vu | } \leq c(\delta)\bigl(1 + \| \vt\|_{\L^s(\Omega)}^{{1- a}}\bigr)
\ \mbox{for some}\ 1 < s < \infty.
\end{equation}
Similarly, we deduce
\begin{equation}
\label{A19}
\intO{ |\vu  |^2 } \leq c(\delta)\bigl(1 + \| \vt\|_{\L^s(\Omega)}^{{1- a}}\bigr)
\ \mbox{for some}\ 1 < s < \infty.
\end{equation}
Finally, using once again the entropy bound \eqref{A13} as well as \eqref{zz},
the last integral in \eqref{A14} can be controlled as follows:
\begin{equation}
\label{A20}
\begin{aligned}
\intO{ |\mathbb{S}_\delta (\vt, \Grad \vu) | } &=
\frac{1}{\sqrt \delta} \Bigl\| \Bigl( \frac{1 + \vt^a}{\vt} \Bigr)^{-\frac{1}{2}}
\sqrt{\delta} \Bigl( \frac{1 + \vt^a}{\vt} \Bigr)^{\frac{1}{2}} \mathbb{D} \vu  \Bigr\|_{\L^1(\Omega; \R^4)} \\
&\leq c_1(\delta) \bigl( 1 + \| \vt^{\frac{1 - a}{2}}\|_{\L^2(\Omega)} \bigr)
\leq c_2(\delta) \bigl( 1 + \| \vt \|_{\L^2(\Omega)}^{\frac{1 - a}{2}} \bigr).
\end{aligned}
\end{equation}
Summing up \eqref{A14}--\eqref{A20}, we obtain
\begin{equation}
\label{A21}
\int_{\partial \Omega} \vt \ {\rm dS} \leq c(\delta)\bigl(1 + \| \vt \|^{\ell}_{\L^s(\Omega)}\bigr)
\ \mbox{for some}\ \ell < 1, \ 1<s < \infty.
\end{equation}
Seeing that
\begin{itemize}

\item In accordance with hypothesis \eqref{H1} and the entropy estimates \eqref{A13}, we get
\begin{equation*}
\| \Grad \vt \|_{\L^2(\Omega; \R^2)} \leq c(\delta).
\end{equation*}

\item The Poincare's inequality leads to
\begin{equation*}
\| \vt \|_{\W^{1,2}(\Omega)} \aleq \| \Grad \vt \|_{\L^2(\Omega; \R^2)} + \| \vt \|_{\L^1(\partial \Omega)}.
\end{equation*}

\item The Sobolev embedding gives
\begin{equation*}
\| \vt \|_{\L^s(\Omega)} \leq c(s) \| \vt \|_{\W^{1,2}(\Omega)} \ \mbox{for any}\ 1<s < \infty.
\end{equation*}

\item It comes from \eqref{A13} and \eqref{A21} that
\begin{equation}
\label{A22}
\| \vt \|_{\W^{1,2}(\Omega)} \leq c(\delta)\quad\text{and}\quad
\| \log(\vt)\|_{\W^{1,2}(\Omega)} \leq c(\delta).
\end{equation}

\end{itemize}

\subsubsection{Estimates of $\vt$ near absolute zero}

We apply the Trudinger--Moser inequality, more specifically the Sobolev embedding
\begin{equation*}
\W^{1,2}(\Omega) \hookrightarrow \L_\Phi (\Omega) ,
\quad\Omega \subset \R^2 \ \mbox{a bounded Lipschitz domain,}
\end{equation*}
where $\L_{\Phi}$ is the Orlicz space with the generating function
$\Phi(z) = \exp (z^2) - 1$, see e.g. Adams \cite[Chap.~8 ]{Adams75SS}.
In particular,
\begin{equation*}
\| v \|_{\L_\Phi(\Omega)} \leq c_S \| v \|_{\W^{1,2}(\Omega)},
\end{equation*}
where $\| \cdot \|_{\L_\Phi(\Omega)}$ is the associated Luxemburg norm. In particular,
\begin{equation} \label{A23}
\intO{ \exp\Bigl( \frac{ |v|^2 }{ c_S^2 \| v \|^2_{\W^{1,2}(\Omega) } } \Bigr) - 1  }
\leq \intO{ \exp\Bigl( \frac{ |v|^2 }{ \| v \|^2_{\L_\Phi(\Omega) } }\Bigr) - 1  } \leq 1
\ \mbox{for any}\ v \in \W^{1,2}(\Omega),\ v \not\equiv 0.
\end{equation}
In view of the bounds established in \eqref{A22}, we
may apply \eqref{A23} to $v = \log(\vt)$ which leads to
\begin{equation}
\label{A24}
\intO{\exp\bigl( { \omega (\delta) |\log(\vt)|^2 } \bigr)}
\leq 1 + |\Omega| \ \mbox{for certain} \ \omega(\delta) > 0,
\end{equation}
and it follows that
\begin{equation}
\label{A25}
\| \vt^\beta \|_{\L^s(\Omega)} \leq c(s, \beta, \delta) \ \mbox{for any}\ 1 \leq s < \infty, \ \beta \in \R.
\end{equation}
Putting together \eqref{A13}, \eqref{A22} and \eqref{A25},
the following uniform bounds for the approximate solutions
depending only on the parameter $\delta > 0$ is obtained:
\begin{equation}
\label{A26}
\begin{aligned}
\| \vt \|_{\W^{1,2}(\Omega)} + \| \log (\vt) \|_{\W^{1,2}(\Omega)} &\leq c(\delta),\\
\| \vt^\beta \|_{\L^s(\Omega)} &\leq c(\delta, \beta, s) \ \mbox{for any}\ 1\leq s < \infty,\ \beta \in \R,\\
\| \vu \|_{\W^{1,r}(\Omega)} &\leq c(\delta, r), \ 1 \leq r < 2.
\end{aligned}
\end{equation}
In addition, we record the bounds on the density can be deduced from
the entropy bounds \eqref{A13}, the standard elliptic estimates applied
to the approximate equation of continuity \eqref{A1} that depend on $\ep$ and
the fact that \(\vu\) is bounded in \(\W^{1,r}(\Omega)\), namely we have
\begin{equation}
\label{A27}
\| \vr \|_{\W^{2,q}(\Omega)} \leq c(\ep, \delta, q) \ \mbox{for any}\ 1 \leq q < 2.
\end{equation}

\section{Limit $N \to \infty$}
\label{N}

Keeping $R > 0$, $\ep > 0$, $\delta > 0$ fixed, we perform the limit $N \to \infty$.
Let $\{ \vr_N, \vu_N, \vt_N \}_{N \geq 1}$ be a sequence of solutions to \eqref{A1-2}--\eqref{A5}.
In view of the uniform estimates summarized in
\eqref{A26} and \eqref{A27} and passing to the subsequences, if necessary, we find
\begin{subequations}
\label{N1}
\begin{align}
\vr_N &\rightharpoonup \vr \ \mbox{weakly in}\ \W^{2,q}(\Omega)\ \mbox{for any}\ 1\leq q < 2,\\
\vt_N &\rightharpoonup \vt \ \mbox{weakly in}\ \W^{1,2}(\Omega),\\
\vt^\beta_N &\rightharpoonup \vt^\beta \ \mbox{weakly in}\ \W^{1,r}(\Omega) \text{ and }
\vt^\beta_N \to \vt^\beta \ \mbox{in}\ \L^s(\Omega)
\ \mbox{for any}\ \beta \in \R,\ 1 \leq r < 2,\ 1 \leq s < \infty, \\
\vu_N &\rightharpoonup \vu \ \mbox{weakly in}\ \W^{1,r}(\Omega; \R^2) \ \mbox{for any}\ 1 \leq r < 2,
\end{align}
\end{subequations}
as $N \to \infty$.

\subsection{Equation of continuity}

According to \eqref{N1} and some standard compactness arguments for
Sobolev spaces, we easily deduce
\begin{equation}
\label{N2}
\Div (\vr \vu) = \ep \Delta \vr - \ep (\vr - \vr_M), \quad \Grad \vr \cdot \vc{n}_{|_{\partial \Omega}} = 0,
\quad
\vr \geq 0, \quad \frac{1}{|\Omega|} \intO{ \vr} = \vr_M.
\end{equation}
As uniform bounds on the velocity gradient are no longer available, the limit density may
vanish at certain point of $\Omega$.

\subsection{Momentum balance}

Using similar arguments as above,
we perform the limit $N \to \infty$ in the momentum equation \eqref{A2}, we find
\begin{equation}
\label{N3}
\begin{aligned}
&\intO{ \Bigl[ \frac{1}{2} T(\vr) \big( \vu \otimes \vu : \Grad \bfphi -
\vu \cdot \Grad \vu \cdot \bfphi \big) + p_R (\vr, \vt)  \Div \bfphi \Bigr] }
\\& = \intO{ \Bigl[ \mathbb{S}_\delta (\vt, \Grad \vu) : \Grad \bfphi - T(\vr) \vc{f} \cdot \bfphi \Bigr] }
\end{aligned}
\end{equation}
for any $\bfphi \in \W^{1,q}_0(\Omega; \R^2)$, $q > 2$, $(\vu - \vuB) \in \W^{1,r}_0(\Omega; \R^2)$,
$1 \leq r < 2$.
Notice that the relation \eqref{N3} is first obtained for any $\bfphi \in X_M$,
$M$ fixed and then it is extended to all $\bfphi \in \W^{1,q}_0(\Omega;\R^2)$
by using a density argument.

\subsection{Total energy balance}

The arguments giving rise to the total energy balance are much more delicate.
The first step consists to pass to the weak formulation of
the internal energy equation \eqref{A3} and \eqref{A4} to get
\begin{equation}
\label{N4}
\begin{aligned}
&-\intO{ \vr_N e_R(\vr_N, \vt_N) \vu_N \cdot \Grad \varphi  } + \int_{\partial \Omega} L (\vt_N - \Ov{\vt})
\varphi {\rm dS} + \intO{ \kappa_\delta(\vt_N) \Grad \vt_N \cdot \Grad \varphi } \\&=
\intO{ \Big[ \mathbb{S}_\delta (\vt_N, \Grad \vu_N) : \Grad \vu_N - p_R(\vr_N, \vt_N)
\Div \vu_N \Big] \varphi }
+ \intO{ \vr_N (G + \ep) \varphi }
\end{aligned}
\end{equation}
for any $\varphi \in \W^{1,\infty}(\Omega)$.
Then $\vu_N - \Ov{\vu} \in X_N$ is used as a test function in the approximate
momentum equation \eqref{A2} giving
\begin{equation}
\label{N5}
\begin{aligned}
&\intO{ \Bigl[ \frac{1}{2} T(\vr_N)
(\vu_N \cdot \Grad \vu_N \cdot \Ov{\vu}  - \vu_N \otimes \vu_N : \Ov{\vu} ) \Bigr] }
\\& = \intO{ \Big[ \mathbb{S}_\delta (\vt_N, \Grad \vu_N) : \Grad (\vu_N - \Ov{\vu}) -
p_R (\vr_N, \vt_N)  \Div \vu_N \Big] }
 - \intO{ T(\vr_N) \vc{f} \cdot (\vu_N - \Ov{\vu})}.
\end{aligned}
\end{equation}
Taking the sum of \eqref{N5} with \eqref{N4} in the case where $\varphi = -1$, we obtain
\begin{equation*}
\begin{aligned}
&\int_{\partial \Omega} L (\Ov{\vt} - \vt_N) \ {\rm dS}+
\intO{ \Bigl[ \frac{1}{2} T(\vr_N) (\vu_N \cdot \Grad \vu_N \cdot \Ov{\vu}
- \vu_N \otimes \vu_N : \Ov{\vu}) \Bigr] }
\\ &= - \intO{ \mathbb{S}_\delta (\vt_N, \Grad \vu_N) : \Grad\Ov{\vu} } - \intO{ \vr_N (G + \ep ) }
 - \intO{ T(\vr_N) \vc{f} \cdot (\vu_N - \Ov{\vu})}.
\end{aligned}
\end{equation*}
Letting $N \to \infty$ we may therefore infer that
\begin{equation}
\label{N6}
\begin{aligned}
&\int_{\partial \Omega} L (\Ov{\vt} - \vt) \ {\rm dS} +
\intO{ \Bigl[ \frac{1}{2} T(\vr)(\vu \cdot \Grad \vu \cdot \Ov{\vu}  - \vu \otimes \vu : \Ov{\vu}) \Bigr] }
\\ &= - \intO{ \mathbb{S}_\delta (\vt, \Grad \vu) : \Grad\Ov{\vu} } - \intO{ \vr (G + \ep) }
 - \intO{ T(\vr_N) \vc{f} \cdot (\vu - \Ov{\vu})  }.
\end{aligned}
\end{equation}
We consider now the internal energy \eqref{N4} and we assume that the test function
$\psi \in \W^{1,\infty}(\Omega)$ with $\psi \geq 0$, we find
\begin{equation*}
\begin{aligned}
&\intO{ \mathbb{S}_\delta (\vt_N, \Grad \vu_N) : \Grad \vu_N \psi }
=-\intO{ \vr_N e_R(\vr_N, \vt_N) \vu_N \cdot \Grad \psi  } + \int_{\partial \Omega} L (\vt_N - \Ov{\vt})
\psi {\rm dS}\\&+ \intO{ \kappa_\delta(\vt_N) \Grad \vt_N \cdot \Grad \psi }
+ \intO{ p_R(\vr_N, \vt_N) \Div \vu_N \psi } - \intO{ \vr_N (G + \ep) \psi }.
 \end{aligned}
\end{equation*}
Letting $N \to \infty$ and using the weak lower semi--continuity of convex functions, we obtain
\begin{equation}
\label{N7}
\begin{aligned}
&\intO{ \mathbb{S}_\delta (\vt, \Grad \vu) : \Grad \vu \psi }
\leq -\intO{ \vr e_R(\vr, \vt) \vu \cdot \Grad \psi  } + \int_{\partial \Omega} L (\vt - \Ov{\vt})
\psi {\rm dS}\\&+ \intO{ \kappa_\delta(\vt) \Grad \vt \cdot \Grad \psi }
+ \intO{ p_R(\vr, \vt) \Div \vu \psi } - \intO{ \vr (G + \ep) \psi }
 \end{aligned}
\end{equation}
for any $\psi \in \W^{1, \infty}(\Omega)$, $\psi \geq 0$. In particular, we have
\begin{equation}
\label{N8}
\mathbb{S}_\delta (\vt, \Grad \vu) : \Grad \vu \in \L^1(\Omega).
\end{equation}
The bound \eqref{N8} allows us to consider the momentum balance \eqref{N3}
with the test function
\begin{equation*}
\bfphi \eqldef \varphi (\vu - \Ov{\vu})\text{ with }\varphi \in \C^1(\Ov{\Omega}),
\end{equation*}
yielding to the following identity:
\begin{equation*}
\begin{aligned}
&\intO{ \varphi \mathbb{S}_\delta (\vt, \Grad \vu) : \Grad \vu } \\&=
\intO{ \Bigl[ \frac{1}{2} T(\vr)( \vu \otimes \vu : \Grad [\varphi (\vu - \Ov{\vu})] -
\varphi \vu \cdot \Grad \vu \cdot (\vu - \Ov{\vu})) + p_R (\vr, \vt)  \Div [\varphi (\vu - \Ov{\vu})] \Bigr] } \\
&+ \intO{\varphi T(\vr) \vc{f} \cdot (\vu -
\Ov{\vu}) } + \intO{ \mathbb{S}_\delta (\vt, \Grad \vu) \cdot \Grad \varphi \cdot (\Ov{\vu} - \vu) }
+ \intO{ \varphi \mathbb{S}_\delta (\vt, \Grad \vu) : \Grad  \Ov{\vu}}\\
&=
\intO{ \frac{1}{2} T(\vr)( \vu \otimes \vu \cdot \Grad \varphi \cdot (\vu - \Ov{\vu}) +
\varphi \vu \cdot \Grad \vu \cdot \Ov{\vu} - \varphi (\vu \otimes \vu) : \Grad \Ov{\vu}) }
\\
&+ \intO{p_R (\vr, \vt)  \Grad\varphi \cdot (\vu - \Ov{\vu})  } +
\intO{ \varphi p_R(\vr, \vt) \Div \vu }\\
 &+ \intO{\varphi T(\vr) \vc{f} \cdot (\vu -
\Ov{\vu}) } + \intO{ \mathbb{S}_\delta (\vt, \Grad \vu) \cdot \Grad \varphi \cdot (\Ov{\vu} - \vu) }
+ \intO{ \varphi \mathbb{S}_\delta (\vt, \Grad \vu) : \Grad  \Ov{\vu}}.
\end{aligned}
\end{equation*}
Thus taking $\varphi = \psi \geq 0$ and using \eqref{N7}, we get
\begin{equation}
\label{N9}
\begin{aligned}
&\intO{ \frac{1}{2} T(\vr)( \vu \otimes \vu \cdot \Grad \psi \cdot (\vu - \Ov{\vu}) +
\psi \vu \cdot \Grad \vu \cdot \Ov{\vu} - \psi (\vu \otimes \vu) : \Grad \Ov{\vu}) }
+ \intO{p_R (\vr, \vt)  \Grad\psi \cdot (\vu - \Ov{\vu})  }\\
 &+ \intO{\psi T(\vr) \vc{f} \cdot (\vu -
\Ov{\vu}) } + \intO{ \mathbb{S}_\delta (\vt, \Grad \vu) \cdot \Grad \psi \cdot (\Ov{\vu} - \vu) }
+ \intO{ \psi \mathbb{S}_\delta (\vt, \Grad \vu) : \Grad  \Ov{\vu}}\\
&\leq
-\intO{ \vr e_R(\vr, \vt) \vu \cdot \Grad \psi  } + \int_{\partial \Omega} L (\vt - \Ov{\vt})
\psi {\rm dS} + \intO{ \kappa_\delta(\vt) \Grad \vt \cdot \Grad \psi } - \intO{ \vr (G + \ep) \psi }
 \end{aligned}
\end{equation}
for any $\psi \in \C^1(\Ov{\Omega})$ with $\psi \geq 0$.
Let us consider now $0 \leq \psi \leq 1$. Taking $(1-\psi) \geq 0$ as a
test function in \eqref{N9}, we obtain
\begin{equation}
\label{N10}
\begin{aligned}
&-\intO{ \frac{1}{2} T(\vr)( \vu \otimes \vu \cdot \Grad \psi \cdot (\vu - \Ov{\vu}) +
\psi \vu \cdot \Grad \vu \cdot \Ov{\vu} - \psi (\vu \otimes \vu) : \Grad \Ov{\vu}) }\\&
- \intO{p_R (\vr, \vt)  \Grad\psi \cdot (\vu - \Ov{\vu})  }- \intO{\psi T(\vr) \vc{f} \cdot (\vu -
\Ov{\vu}) } - \intO{ \mathbb{S}_\delta (\vt, \Grad \vu) \cdot \Grad \psi \cdot (\Ov{\vu} - \vu) }
\\&- \intO{ \psi \mathbb{S}_\delta (\vt, \Grad \vu) : \Grad  \Ov{\vu}}
\leq
\intO{ \vr e_R(\vr, \vt) \vu \cdot \Grad \psi  } - \int_{\partial \Omega} L (\vt - \Ov{\vt})
\psi {\rm dS} - \intO{ \kappa_\delta(\vt) \Grad \vt \cdot \Grad \psi } \\&+\intO{ \vr (G + \ep) \psi }
+ \intO{ \frac{1}{2} T(\vr)( \vu \otimes \vu : \Grad \Ov{\vu} - \vu \cdot \Grad \vu \cdot \Ov{\vu}) }
+ \intO{ T(\vr) \vc{f} \cdot (\Ov{\vu} - \vu) } \\&- \intO{ \mathbb{S}_\delta (\vt, \Grad \vu) : \Grad  \Ov{\vu}}
+\int_{\partial \Omega} L (\vt - \Ov{\vt}) \ {\rm dS} - \intO{ \vr (G + \ep) }.
 \end{aligned}
\end{equation}
However, by virtue of \eqref{N6}, the sum of the integrals on the right--hand side of
\eqref{N10} that do not contain $\psi$ vanishes. Consequently, \eqref{N10} is
reduced to \eqref{N9} with the opposite inequality. Therefore, we conclude \eqref{N9} holds
as an equality, namely we have
\begin{equation}
\label{N11}
\begin{aligned}
&\intO{\Bigl( \frac{1}{2} T(\vr) |\vu|^2 + \vr e_R(\vr, \vt)\Bigr) \vu \cdot \Grad \varphi }
+ \intO{p_R (\vr, \vt) \vu \cdot \Grad\varphi   } + \intO{ \vQ_\delta \cdot \Grad \varphi }
\\&- \intO{ \mathbb{S}_\delta (\vt, \Grad \vu) \cdot \vu \cdot \Grad \varphi  }
= \int_{\partial \Omega} L (\vt - \Ov{\vt})
\varphi \ {\rm dS}  - \intO{ \varphi \vr (G + \ep)  } - \intO{\varphi T(\vr) \vc{f} \cdot \vu}
\\
&+ \intO{ \frac{1}{2} T(\vr) (
 (\vu \otimes \vu) : \Grad (\varphi \Ov{\vu}) - \vu \cdot \Grad \vu \cdot (\varphi \Ov{\vu})) }
+ \intO{ p_R (\vr, \vt) \Div (\varphi \Ov{\vu}) } \\
 &+ \intO{T(\vr) \vc{f} \cdot
(\varphi \Ov{\vu}) } - \intO{ \mathbb{S}_\delta (\vt, \Grad \vu) : \Grad (\varphi \Ov{\vu}) }
 \end{aligned}
\end{equation}
for any $\varphi \in \W^{1,\infty}({\Omega})$.
The integral equality \eqref{N11} represents a weak formulation of the total energy balance.
Note that for $\varphi \in \W^{1,q}_0 (\Omega; \R^2)$, the momentum
equation \eqref{N3} can used to eliminate the integrals containing $\Ov{\vu}$ which gives
\begin{equation}
\label{N12}
\begin{aligned}
&\intO{ \Big( \frac{1}{2} T(\vr) |\vu|^2 + \vr e_R(\vr, \vt)    \Big) \vu \cdot \Grad \varphi }
+ \intO{p_R (\vr, \vt) \vu \cdot \Grad\varphi   } + \intO{ \vQ_\delta \cdot \Grad \varphi }\\&
- \intO{ \mathbb{S}_\delta (\vt, \Grad \vu) \cdot \vu \cdot \Grad \varphi  }
=  - \intO{ \varphi \vr (G + \ep)  } - \intO{\varphi T(\vr) \vc{f} \cdot \vu}
\end{aligned}
\end{equation}
for any $\varphi \in \W^{1,r}_0({\Omega})$. Note that \eqref{N12} can be formally interpreted
as the energy equation, namely we have
\begin{equation}
\label{N13}
\begin{aligned}
&\Div \Bigl[ \Bigl( \frac{1}{2} T(\vr) |\vu|^2 + \vr e_R(\vr, \vt)  \Bigr) \vu \Bigr]
+ \Div (p_R(\vr, \vt) \vu) + \Div \vQ_\delta\\&= \Div( \mathbb{S}_\delta (\vt, \Grad \vu) \cdot \vu)
+ T(\vr) \vc{f} \cdot \vu + \vr (G + \ep).
\end{aligned}
\end{equation}

\subsection{Entropy inequality}

We conclude the limit passage $N \to \infty$ by reporting the entropy inequality:
\begin{equation}
\label{N14}
\begin{aligned}
&\intO{ \varphi
\frac{1}{\vt} \Bigl[ \mathbb{S}_\delta (\vt, \Grad \vu) : \Grad \vu + \frac{ \kappa_\delta (\vt) }{\vt}
|\Grad \vt |^2 \Bigr] } + \intO{ \frac{\vr}{\vt}(G + \ep) \varphi } \\
&\leq  \int_{\partial \Omega} \varphi L \Bigl( 1 - \frac{\Ov{\vt}}{\vt} \Bigr) {\rm dS} -
\intO{ \Bigl( \frac{\vQ_\delta (\vt, \Grad \vt) }{\vt} \Bigr) \cdot \Grad \varphi }
- \intO{ ( \vr  s_R(\vr, \vt)  \vu) \cdot \Grad \varphi } \\
&+ \ep \intO{ \varphi( \Delta \vr - ( \vr - \vr_M))( \vr h_R (\vr) -
s_R(\vr, \vt)) }
\end{aligned}
\end{equation}
for any $\varphi \in \W^{1, \infty}(\Omega)$, $\varphi \geq 0$,
that can be easily deduced from \eqref{A8} as well as the convergence in \eqref{N1}.

\section{Limit $R \to \infty$}
\label{R}

Our next goal is to let the truncation parameter $R \to \infty$ and thus to establish some uniform
estimates on the density.
Let $\{ \vr_R, \vu_R, \vt_R \}_{R > 0}$ be the associated sequence of solutions to the
problem \eqref{N2}, \eqref{N3}, \eqref{N11}, satisfying also the entropy inequality
\eqref{N14}.
For fixed values of the parameters $\ep$ and $\delta$,
the estimates \eqref{A26} and \eqref{A27} remain valid. Then passing to
the subsequences, if necessary, we find
\begin{subequations}
\label{R1}
\begin{align}
\vr_R &\rightharpoonup \vr \ \mbox{weakly in}\ \W^{2,q}(\Omega)\ \mbox{for any}\ 1\leq q < 2,\\
\vt_R &\rightharpoonup \vt \ \mbox{weakly in}\ \W^{1,2}(\Omega),\\
\vt^\beta_R &\rightharpoonup \vt^\beta \ \mbox{weakly in}\ \W^{1,r}(\Omega)
\text{ and }\vt^\beta_R \to \vt^\beta \ \mbox{in}\ \L^s(\Omega)
\ \mbox{for any}\ \beta \in \R,\ 1 \leq r < 2,\ 1 \leq s < \infty, \\
\vu_R &\rightharpoonup \vu \ \mbox{weakly in}\ \W^{1,r}(\Omega; \R^2) \ \mbox{for any}\ 1 \leq r < 2,
\end{align}
\end{subequations}
as $R \to \infty$.
Moreover, we assume that
\begin{equation} \label{R1bis}
h'_R \Bigl( \Ov{\vr} - \frac{1}{R} \Bigr) \to \infty\ \mbox{as}\ R \to \infty.
\end{equation}

\subsection{Pressure estimates}

We derive bounds on the pressure $p_R(\vr_R, \vt_R)$, uniform for $R \to \infty$.
To this end, we introduce an inverse operator
$\mathcal{B}$ to $\Div$ constructed by Bogovskii \cite{BOG}.
The following properties of $\mathcal{B}$ are nowadays rather standard, we refer to the
monograph by Galdi \cite{GAL} for the proofs.

\begin{itemize}
\item The operator $\mathcal{B}$ satisfies
\begin{equation*}
\Div( \mathcal{B} [h]) = h\quad\text{and}\quad \mathcal{B}[h]_{|_{\partial \Omega}} = 0
\ \mbox{for any} \ h \in \L^q(\Omega),\ 1 < q < \infty\quad\text{and}\quad \intO{ h } = 0.
\end{equation*}

\item \(\mathcal{B}\) can be extended to functions \(h\in\L^q(\Omega)\),
\begin{equation}
\label{R2}
\|\mathcal{B}[h]\|_{\W^{1,q}_0 (\Omega; \R^2)} \leq c(q) \| h \|_{\L^q(\Omega)}.
\end{equation}

\item If $h \in \L^q(\Omega)$, $1 < q < \infty$, $h = \Div \vc{g}$, $\vc{g}
\in \L^r(\Omega;\R^2)$ such that
$\vc{g} \cdot \vc{n}_{|_{\partial \Omega}} = 0$, then
\begin{equation}
\label{R3}
\|\mathcal{B}[h]\|_{\L^r (\Omega;\R^2)} \leq c(p,r) \| \vc{g} \|_{\L^r(\Omega;\R^2)}.
\end{equation}

\end{itemize}
The first step is to consider
\begin{equation*}
\bfphi = \mathcal{B} [\vr - \vr_M]
\end{equation*}
as a test function in the approximate momentum equation \eqref{N3}, we get
\begin{equation}
\label{R4}
\begin{aligned}
&\intO{( p_R(\vr_R, \vt_R) - p_R(\vr_M, \vt_R)) ( \vr_R - \vr_M) }
+ \intO{ p_R(\vr_M, \vt_R) \vr_R }  \\
&= \intO{ \Bigl[ \frac{1}{2} T(\vr_R) (\vu_R \cdot \Grad \vu_R \cdot \mathcal{B} [\vr_R - \vr_M]
- \vu_R \otimes \vu_R : \Grad \mathcal{B} [\vr _R- \vr_M] )\Bigr] }
\\ &+ \intO{ \Bigl[ \mathbb{S}_\delta (\vt_R, \Grad \vu_R) : \Grad \mathcal{B} [\vr_R
- \vr_M] - T(\vr_R) \vc{f} \cdot \mathcal{B} [\vr_R - \vr_M] \Bigr] } + \intO{ p_R (\vr_M, \vt_R) \vr_M }.
\end{aligned}
\end{equation}
Observe that
\begin{equation*}
( p_R(\vr_R, \vt_R) - p_R(\vr_M, \vt_R)) ( \vr_R - \vr_M) =
\vt_R( \vr_R h_R(\vr) - \vr_M h_R(\vr_M) )( \vr_R - \vr_M).
\end{equation*}
In virtue of the construction of the truncation $h_R$ specified in \eqref{H3}
and \eqref{R1bis}, we have
\begin{equation*}
\frac{\partial}{\partial \vr} (\vr h_R(\vr)) = h_R(\vr) + \vr h'_R(\vr) \ageq \vr  \ \mbox{for all}\ \vr \geq 0,
\end{equation*}
Notice that
\begin{equation*}
\bigl|\vr_R h_R(\vr_R)-\vr_Mh_R(\vr_M)\bigr|
\ageq
\Bigl|
\int_{\vr_M}^{\vr_R}z\,{\rm d}z
\Bigr|\ageq
\frac12(\vr_R-\vr_M)^2,
\end{equation*}
which implies that
\begin{equation*}
\vt_R | \vr_R - \vr_M |^3 \aleq  ( p_R(\vr_R, \vt_R) - p_R(\vr_M, \vt_R))
( \vr_R - \vr_M).
\end{equation*}
Consequently, we may rewrite inequality \eqref{R4} in the following form
\begin{equation}
\label{R5}
\begin{aligned}
&\frac{1}{2}\intO{( p_R(\vr_R, \vt_R) - p_R(\vr_M, \vt_R))( \vr_R - \vr_M) }
+ \frac{1}{2} \bigl\| \vt_R^{\frac{1}{3}} ( \vr_R - \vr_M) \bigr\|^3_{\L^3(\Omega)}   \\
&\aleq  \intO{ \Bigl[ \frac{1}{2} T(\vr_R) \big(\vu_R \cdot \Grad \vu_R \cdot \mathcal{B}
[\vr_R - \vr_M] - \vu_R \otimes \vu_R : \Grad \mathcal{B} [\vr_R - \vr_M] \big)\Bigr] }
\\ &+ \intO{ \Bigl[ \mathbb{S}_\delta (\vt_R, \Grad \vu_R) :
\Grad \mathcal{B} [\vr_R - \vr_M] - T(\vr_R) \vc{f} \cdot \mathcal{B} [\vr_R - \vr_M] \Bigr] }
+ \intO{ p_R (\vr_M, \vt_R) \vr_M }.
\end{aligned}
\end{equation}
Keeping in mind the uniform bounds in \eqref{R1}, it is easy to control
the integrals on the right--hand side of \eqref{R5} by those on the left--hand side.
Indeed, in view of the properties of the operator $\mathcal{B}$ listed in \eqref{R2} and
\eqref{R3}, we have
\begin{equation*}
\begin{aligned}
&\| \mathcal{B}[\vr_R - \vr_M] \|_{\L^q(\Omega; \R^2)} +
\| \Grad \mathcal{B}[\vr_R - \vr_M] \|_{\L^q(\Omega; \R^4)} \\ &\aleq \| \vr_R - \vr_M \|_{\L^q(\Omega)} =
\bigl\| \vt_R^{-\frac{1}{3}} \vt_R^{\frac{1}{3}} (\vr_R - \vr_M) \bigr\|_{\L^q(\Omega)}
\leq c(\delta, q) \bigl\| \vt_R^{\frac{1}{3}} (\vr_R - \vr_M) \bigr\|_{\L^3(\Omega)}
\ \mbox{for any}\ 1 \leq q < 3.
\end{aligned}
\end{equation*}
Consequently, in view of the uniform bounds \eqref{R1}, we deduce from \eqref{R5}
that the integrals on the left--hand side are bounded uniformly for $R \to \infty$, which
implies that
\begin{equation*}
\frac12\|\vt_R^{1/3}(\vr_R-\vr_M)\|_{\L^3(\Omega)}^3+
\int_{\Omega}p_R(\vr_R,\vt_R)(\vr_R-\vr_M)\ {\rm d}x
\leq c(\delta)+\int_{\Omega}p_R(\vr_M,\vt_R)(\vr_R-\vr_M).
\end{equation*}
From \eqref{R1} we may deduce that \(\intO{p_R(\vr_M,\vr_M)(\vr_R-\vr_M)}\)
is controlled by \(\|\vt_R^{1/3}(\vr_R-\vr_M)\|_{\L^3(\Omega)}^3\). By considering
two cases: \(\vr_R\leq \vr_M+\eta\) and \(\vr_R> \vr_M+\eta\), respectively, with \(\eta>0\)
small enough, we finally get
\begin{equation}
\label{R7}
{\{ p^\gamma_R\}}_m \eqldef\frac{1}{|\Omega|} \intO{ p^\gamma_R(\vr_R, \vt_R) } \leq
c(\delta),\
0 \leq \gamma \leq 1.
\end{equation}
The second step consists to repeat the same procedure with
\begin{equation*}
\mathcal{B} \bigl[ p^\gamma_R - {\{ p^\gamma_R\}}_m \bigr], \ 0< \gamma < 1.
\end{equation*}
Similarly using \eqref{R7}, we deduce that
\begin{equation}
\label{R8}
\begin{aligned}
&\intO{ p_R^{\gamma + 1} }
\leq \intO{ \Bigl[ \frac{1}{2} T(\vr_R) \bigl(\vu_R \cdot \Grad \vu_R \cdot \mathcal{B}
[ p^{\gamma}_R - {\{ p_R^{\gamma}\}}_m] - \vu_R \otimes \vu_R : \Grad \mathcal{B}
[ p^{\gamma}_R - {\{ p^{\gamma}_R \}}_m]  \big)\Bigr] }
\\ &+ \intO{ \Bigl[ \mathbb{S}_\delta (\vt_R, \Grad \vu_R) : \Grad \mathcal{B}
\bigl[ p^{\gamma}_R - {\{ p^{\gamma}_R \}}_m]
- T(\vr) \vc{f} \cdot \mathcal{B}[ p^{\gamma}_R - {\{p_R^{\gamma}\}}_m] \bigr]}.
\end{aligned}
\end{equation}
Once again using the bounds \eqref{R1} combined with the properties of the operator
$\mathcal{B}$, we may infer that all integrals
on the right--hand side of \eqref{R8} can be controlled,
modulo a multiplicative constant, by the following norm
\begin{equation*}
{\|p^\gamma_R\|}_{\L^q(\Omega)} \ \mbox{as soon as}\ q > 2.
\end{equation*}
Thus for $q = \frac{\gamma + 1}{\gamma}$, we may conclude that
\begin{equation*}
\intO{ p_R^{\gamma + 1} } \leq c(\delta) \bigl( 1 + \| p_R \|^\gamma_{\L^{\gamma + 1}(\Omega)}
\bigr),
\end{equation*}
and, consequently, we get
\begin{equation}
\label{R9}
\intO{ p_R^{\gamma + 1}(\vr_R, \vt_R) } \leq c(\delta, \gamma),\ 0
< \gamma < 1 \ \mbox{uniformly for}\ R \to \infty.
\end{equation}
Finally, writing
\begin{equation*}
\vr_R h_R(\vr_R) = \vt_R^{-1}
p_R(\vr_R, \vt_R),
\end{equation*}
we also obtain
\begin{equation}
\label{R10}
\intO{ | \vr_R h_R(\vr_R) |^\omega } \leq c(\delta, \omega),\
0 < \omega < 2 \ \mbox{uniformly for}\ R \to \infty.
\end{equation}

\subsection{Convergence and the limit system}

With \eqref{R1}, \eqref{R9} and \eqref{R10} at hand, it is standard to
perform the limit for $R \to \infty$ in the system of approximate equations.
Moreover, as the limit pressure is singular at $\Ov{\vr}$ (see hypothesis \eqref{M2}), we deduce
from \eqref{R10} that
\begin{equation} \label{R11}
0 \leq \vr < \Ov{\vr} \ \mbox{a.e in}\ \Omega\quad\text{and}\quad
\| p(\vr, \vt) \|_{\L^\omega(\Omega)} \leq c(\delta, \omega) \ \mbox{for any}\ 1\leq \omega < 2,
\end{equation}
cf. also \cite{CiFeJaPe}.
Accordingly, the limit system of equations reads as follows:
\begin{subequations}
\label{R12-13}
\begin{align}
\label{R12}
&\Div (\vr \vu) = \ep \Delta \vr - \ep (\vr - \vr_M),\quad
\Grad \vr \cdot \vc{n}_{|_{\partial \Omega}} = 0,\quad
0 \leq \vr < \Ov{\vr} , \quad \frac{1}{|\Omega|} \intO{ \vr } = \vr_M,\\&
\label{R13}
\intO{ \Bigl[ \frac{1}{2} \vr( \vu \otimes \vu : \Grad \bfphi -
\vu \cdot \Grad \vu \cdot \bfphi) + p (\vr, \vt)  \Div \bfphi \Bigr] }
= \intO{ \Bigl[ \mathbb{S}_\delta (\vt, \Grad \vu) : \Grad \bfphi - \vr \vc{f} \cdot \bfphi \Bigr] }
\end{align}
\end{subequations}
for any $\bfphi \in \W^{1,q}_0(\Omega; \R^2)$, $q > 2$. Notice that $(\vu - \vuB) \in \W^{1,r}_0
(\Omega; \R^2)$, $1 \leq r < 2$ and
\begin{equation}
\label{R14}
\begin{aligned}
&\intO{ \Big( \frac{1}{2} \vr |\vu|^2 + \vr e(\vr, \vt)    \Big) \vu \cdot \Grad \varphi }
+ \intO{p (\vr, \vt) \vu \cdot \Grad\varphi   } + \intO{ \vQ_\delta \cdot \Grad \varphi }
\\&- \intO{ \mathbb{S}_\delta (\vt, \Grad \vu) \cdot \vu \cdot \Grad \varphi  }
= \int_{\partial \Omega} L (\vt - \Ov{\vt})
\varphi \ {\rm dS} - \intO{ \varphi \vr (G + \ep)  } - \intO{\varphi \vr \vc{f} \cdot \vu}
\\
&+ \intO{ \frac{1}{2} \vr \big(
 (\vu \otimes \vu) : \Grad (\varphi \Ov{\vu}) - \vu \cdot \Grad \vu \cdot (\varphi \Ov{\vu})  \big) }
+ \intO{ p (\vr, \vt) \Div (\varphi \Ov{\vu}) } \\
 &+ \intO{\vr \vc{f} \cdot
(\varphi \Ov{\vu}) } - \intO{ \mathbb{S}_\delta (\vt, \Grad \vu) : \Grad (\varphi \Ov{\vu}) }
 \end{aligned}
\end{equation}
for any $\varphi \in \W^{1,\infty}({\Omega})$;
together with the entropy inequality
\begin{equation}
\label{R15}
\begin{aligned}
&\intO{ \varphi
\frac{1}{\vt} \Bigl[ \mathbb{S}_\delta (\vt, \Grad \vu) : \Grad \vu + \frac{ \kappa_\delta (\vt) }{\vt}
|\Grad \vt |^2 \Bigr] } + \intO{ \frac{\vr}{\vt}(G + \ep) \varphi } \\
&\leq  \int_{\partial \Omega} \varphi L \Bigl( 1 - \frac{\Ov{\vt}}{\vt} \Bigr) {\rm dS}-
\intO{\Bigl( \frac{\vQ_\delta (\vt, \Grad \vt) }{\vt}\Bigr) \cdot \Grad \varphi }
- \intO{( \vr  s(\vr, \vt)  \vu) \cdot \Grad \varphi } \\
&+ \ep \intO{ \varphi( \Delta \vr - ( \vr - \vr_M))( \vr h (\vr) -
s(\vr,\vt)) }
\end{aligned}
\end{equation}
for any $\varphi \in \W^{1, \infty}(\Omega)$, $\varphi \geq 0$.

\section{Limit $\ep \to 0$}
\label{E}

The process $\ep \to 0$ is crucial as it requires strong convergence of the approximate densities.
We use the approach proposed by Lions in \cite{LI4}, based on the monotonicity
of the pressure, combined with the Commutator Lemma, introduced in \cite{EF71},
to handle the temperature fluctuations of the viscosity coefficients.
Keeping $\delta > 0$ fixed, we consider a family $\{ \vre, \vue, \vte \}_{\ep > 0}$
of solutions of the approximate system
(\ref{R12}--\ref{R15}). Given the available $\delta-$dependent estimates derived
in the preceding part, passing to the subsequences, if necessary, we find
\begin{subequations}
\label{E1}
\begin{align}
\vre &\rightharpoonup\vr \ \mbox{weakly-$\star$ in}\ \L^\infty(\Omega)\ \mbox{with}\ 
0 \leq \vr \leq \Ov{\vr},\\
\vte &\rightharpoonup \vt \ \mbox{weakly in}\ \W^{1,2}(\Omega),\\
\vte^\beta &\rightharpoonup \vt^\beta \ \mbox{weakly in}\ \W^{1,r}(\Omega)
\ \mbox{and}\ \vte^\beta \to \vt^\beta \ \mbox{in}\ \L^s(\Omega)
\ \mbox{for any}\ \beta \in \R,\ 1 \leq r < 2,\ 1 \leq s < \infty, \\
\vue &\rightharpoonup \vu \ \mbox{weakly in}\ \W^{1,r}(\Omega; \R^2) \ \mbox{for any}\ 1 \leq r < 2,
\end{align}
\end{subequations}
as $\ep \to 0$.
Moreover, as a consequence
of \eqref{R11}, we have
\begin{equation}
\label{E2a}
p_\ep = p(\vre, \vte) \to \Oov{p(\vr, \vt)} \ \mbox{weakly in}\ \L^\omega(\Omega)\ \mbox{for any}\
1 \leq \omega < 2.
\end{equation}

\subsection{Strong convergence of approximate densities}

Our goal is to show, up to a suitable subsequence,
\begin{equation} \label{E2}
\vre \to \vr \ \mbox{a.e in}\ \Omega.
\end{equation}
The proof is based on monotonicity of the pressure in the density variable, cf. hypothesis \eqref{M2}.
Similarly to \cite{CiFeJaPe}, we show that
\begin{equation} \label{E3}
\Oov{ p(\vr, \vt) \vr } = \Oov{p(\vr, \vt)} \vr \ \mbox{a.e in}\ \Omega,
\end{equation}
where the bar is used to denote a weak limit of the corresponding composition.
In view of the strong convergence of the
temperature in \eqref{E1}, relation \eqref{E3} gives rise to
\begin{equation*}
\vt \ \Oov{ \vr h(\vr) \vr } = \vt \ \Oov{h(\vr) \vr} \ \vr,
\end{equation*}
but since $\vt > 0$ almost everywhere in $\Omega$, this yields
\begin{equation*}
\Oov{ \vr h(\vr) \vr } = \Oov{h(\vr) \vr} \ \vr \ \mbox{a.e in}\ \Omega.
\end{equation*}
The function $\vr \mapsto \vr h(\vr)$ being (strictly) increasing, cf. \eqref{M2},
this implies \eqref{E2}, exactly as in
\cite{CiFeJaPe}.

Following the approach of Lions \cite{LI4}, we derive \eqref{E3}
from the effective viscous flux identity. To this end, we first
perform the limit in the momentum equation \eqref{R13}:
\begin{equation}
\label{E4}
\intO{ \Bigl[ \frac{1}{2}  \Big( \Oov{\vr \vu \otimes \vu} : \Grad \bfphi -
\Oov{\vr \vu \cdot \Grad \vu } \cdot \bfphi \Big) + \Oov{p (\vr, \vt)}  \Div \bfphi \Bigr] }
 = \intO{ \Bigl[ \mathbb{S}_\delta (\vt, \Grad \vu) : \Grad \bfphi - \vr \vc{f} \cdot \bfphi \Bigr] }
\end{equation}
for any $\bfphi \in \W^{1,q}_0(\Omega; \R^2)$, $q > 2$, $(\vu - \vuB) \in
\W^{1,r}_0(\Omega; \R^2)$, $1 \leq r < 2$. Note that
$\Oov{\mathbb{S}_\delta (\vt, \Grad \vu)} = \mathbb{S}_\delta (\vt, \Grad \vu)$
thanks to the strong convergence of the approximate temperatures.

Now, we repeat the same process with the test function
\begin{equation*}
\bfphi = \phi \Grad \Delta^{-1} [\phi \vre] \ \mbox{where}\ \phi \in \DC(\Omega),\
0 \leq \phi \leq 1,
\end{equation*}
and $\Delta^{-1}$ is the inverse of the Laplace operator defined by means of the
Green function on $\R^2$. Plugging $\bfphi$ in
\eqref{R13}, peforming the limit and regrouping terms in the limit expression, we find
\begin{equation}
\label{E5}
\begin{aligned}
&\intO{ \Bigl[ \frac{1}{2}  \bigl( \Oov{\vr \vu \otimes \vu : \Grad \bigl( \phi \Grad \Delta^{-1}
[\phi \vr] \bigr) }-
\phi \Oov{\vr \vu \cdot \Grad \vu } \cdot \Grad \Delta^{-1} [\phi \vr] \bigr) +
\phi \vr \vc{f} \cdot \Grad \Delta^{-1} [\phi \vr] \Bigr] }
\\& = \intO{ \Bigl[ \Oov{ \mathbb{S}_\delta (\vt, \Grad \vu) : \Grad \bigl(
\phi \Grad \Delta^{-1} [\phi \vr ] \bigr)} - \Oov{p (\vr, \vt)  \Div \bigl( \phi \Grad
\Delta^{-1} [\phi \vr]  \bigr)} \Bigr] }.
\end{aligned}
\end{equation}
Note that, thanks to the regularizing properties of the operator $\Delta^{-1}$,  we have
\begin{equation}
\label{E5bis}
\phi \Grad \Delta^{-1} [\phi \vre] \to \phi \Grad \Delta^{-1} [\phi \vr]
\ \mbox{(strongly) in}\ \C^0(\Ov{\Omega}).
\end{equation}
Finally, we use the quantity
\begin{equation*}
\bfphi = \phi \Grad \Delta^{-1} [\phi \vr]
\end{equation*}
as a test function in the limit equation \eqref{E4}, we get
\begin{equation}
\label{E6}
\begin{aligned}
&\intO{ \Bigl[ \frac{1}{2}  \bigl( \Oov{\vr \vu \otimes \vu} : \Grad \bigl( \phi \Grad
\Delta^{-1} [\phi \vr] \bigr) -
\phi \Oov{\vr \vu \cdot \Grad \vu } \cdot \Grad \Delta^{-1} [\phi \vr] \bigr)
+ \phi \vr \vc{f} \cdot \Grad \Delta^{-1} [\phi \vr] \Bigr] }
\\& = \intO{ \Bigl[  \mathbb{S}_\delta (\vt, \Grad \vu) : \Grad \bigl(
\phi \Grad \Delta^{-1} [\phi \vr ] \bigr) - \Oov{p (\vr, \vt)}  \Div \bigl( \phi \Grad \Delta^{-1} [\phi \vr]
\bigr) \Bigr] }.
\end{aligned}
\end{equation}
Now, we compare the terms on the right--hand sides of \eqref{E5}, \eqref{E6}.
As the velocity converges strongly, we have
\begin{equation*}
\begin{aligned}
\Oov{\vr \vu \otimes \vu : \Grad \bigl( \phi \Grad \Delta^{-1} [\phi \vr] \bigr) }
&= \vu \cdot \Oov{\vr \vu \cdot \Grad \bigl( \phi \Grad \Delta^{-1} [\phi \vr] \bigr) }\\
\Oov{\vr \vu \otimes \vu} : \Grad \bigl( \phi \Grad \Delta^{-1} [\phi \vr] \bigr)
&= \vu \cdot \Oov{\vr \vu} \cdot \Grad \bigl( \phi \Grad \Delta^{-1} [\phi \vr] \bigr) .
\end{aligned}
\end{equation*}
Next, we observe, exactly as in \cite{CiFeJaPe} that
\begin{equation}
\label{E6bis}
\Oov{\vr \vu \cdot \Grad \bigl( \phi \Grad \Delta^{-1} [\phi \vr] \bigr) } =
\Oov{\vr \vu} \cdot \Grad \bigl( \phi \Grad \Delta^{-1} [\phi \vr] \bigr).
\end{equation}
To this end, we use Div--Curl Lemma (see Tartar \cite{Tar79CCPD}), we get
\begin{equation*}
{\rm curl}\Grad \bigl( \phi \Grad \Delta^{-1} [\phi \vre] \bigr)  = 0,
\end{equation*}
and
\begin{equation*}
\Div (\vre \vue) = \ep \Delta \vre - \ep (\vre - \vr_M).
\end{equation*}
Now, we take $\varphi = 1$ in the entropy inequality \eqref{R15}, we find
\begin{equation}
\label{E8}
\begin{aligned}
&\intO{
\frac{1}{\vt} \Bigl[ \mathbb{S}_\delta (\vte, \Grad \vue) : \Grad \vue + \frac{ \kappa_\delta (\vte) }{\vte}
|\Grad \vte |^2 \Bigr] } + \intO{ \frac{\vre}{\vt}(G + \ep) }\\&
+ \ep \intO{ \Bigl( h(\vre) + \vre h'(\vre) + \frac{h(\vre)}{\vre} \Bigr) |\Grad \vre |^2 }\\
&+ \ep \intO{ (\vre - \vr_M) \Bigl( \vre h(\vre) - \vr_M h(\vr_M) + \int_{\vr_M}^{\vre}
\frac{h(z)}{z} \ {\rm d}z \Bigr) }
\\
&\leq  \int_{\partial \Omega} L \Bigl( 1 - \frac{\Ov{\vt}}{\vte} \Bigr) {\rm dS} +
- \ep c_v \intO{ \Grad \vre \cdot \Grad \log(\vte) }
+\ep \intO{ ( \vre - \vr_M)
c_v \log(\vte) }.
\end{aligned}
\end{equation}
In particular, we deduce
\begin{equation}
\label{E9}
\ep \| \Grad \vre \|^2_{\L^2(\Omega)} \leq c(\delta).
\end{equation}
We may deduce from \eqref{A1} and \eqref{E9}
that \(\Div(\vr_{\varepsilon}\vu_{\varepsilon})\rightarrow 0\)
in \(\W^{-1,2}(\Omega)\) then
\(\Div (\vre \vue)\) belongs to a compact set in \(\W^{-1,2}(\Omega)\).
Thus relation \eqref{E6bis} follows directly from Div--Curl Lemma.

Comparing \eqref{E5} and \eqref{E6}, we obtain
\begin{equation*}
\begin{aligned}
&\intO{ \Bigl[ \Oov{ \mathbb{S}_\delta (\vt, \Grad \vu) : \Grad \bigl(
\phi \Grad \Delta^{-1} [\phi \vr ] \bigr)} - \Oov{p (\vr, \vt)  \Div \bigl( \phi \Grad \Delta^{-1}
[\phi \vr]    \bigr)} \Bigr] }\\
&=\intO{ \Bigl[  \mathbb{S}_\delta (\vt, \Grad \vu) : \Grad \bigl(
\phi \Grad \Delta^{-1} [\phi \vr ] \bigr) - \Oov{p (\vr, \vt)}  \Div \bigl( \phi \Grad \Delta^{-1} [\phi \vr]
\bigr) \Bigr] }
\end{aligned}
\end{equation*}
that can be simplified via \eqref{E5bis} to
\begin{equation}
\label{E12}
\begin{aligned}
\intO{ \Bigl[ \phi^2 \Oov{ p(\vr, \vt) \vr } - \Oov{p(\vr, \vt)} \vr \Bigr] }&=
\intO{ \phi \bigl(  \Oov{ \mathbb{S}_\delta (\vt, \Grad \vu) :
\Grad \Delta^{-1} \Grad [\phi \vr ] } - \mathbb{S}_\delta (\vt, \Grad \vu) : \Grad
\Delta^{-1} \Grad [\phi \vr ] \bigr)  }\\
&=
\intO{ \phi \bigl(  \Oov{ \Grad \Delta^{-1} \Grad : (\phi \mathbb{S}_\delta (\vt, \Grad \vu))
\vr  } - \Grad
\Delta^{-1} \Grad  : ( \phi \mathbb {S}_\delta (\vt, \Grad \vu)  \vr  )\bigr)  }
\end{aligned}
\end{equation}
Our plan consists in replacing
\(\Grad \Delta^{-1} \Grad : ( \phi \mathbb{S}_\delta (\vt, \Grad \vu))\) by
\((\mu_{\delta} (\vt) + g(\vt) ) \Div \vu\)
in the identity \eqref{E12} where \(g\) is a polynomial increasing function.
To this end, write
\begin{equation*}
\Grad \Delta^{-1} \Grad : ( \phi \mathbb{S}_\delta (\vt, \Grad \vu)) =
\bigl( \Grad \Delta^{-1} \Grad : ( \phi \mathbb{S}_\delta (\vt, \Grad \vu)) -
(\mu_{\delta} (\vt) + g (\vt) ) \Div \vu \bigr)+ (\mu_{\delta} (\vt) + g(\vt) ) \Div \vu.
\end{equation*}
The expression in the curly brackets is a commutator of the pseudo--differential
operator $\Grad \Delta^{-1} \Grad$
with multiplication by a function of $\vt$. It enjoys extra compactness properties
exploited in \cite{EF71}. We report the following result that can be see as a version
of the abstract results of Coifman and Meyer \cite{COME}:

\begin{Lemma}[Commutator Lemma]
\label{EL1}
Let $w \in \W^{1,r}(\R^N)$ and $\vc{V} \in \L^q(\R^N; \R^N)$ be given fields,
\begin{equation*}
1 < r < N, \quad 1 < q < \infty,\quad \frac{1}{r} + \frac{1}{q} < 1 + \frac{1}{N}.
\end{equation*}
Then for any $s$ satisfying
\begin{equation*}
\frac{1}{r} + \frac{1}{q} - \frac{1}{N} < \frac{1}{s} < 1,
\end{equation*}
there exists $\beta \in (0,1)$ such that
\begin{equation*}
\| \Grad \Delta^{-1} \Grad \cdot [w \vc{V}] - w \Grad \Delta^{-1} \Grad \cdot [\vc{V}]
\|_{\W^{\beta,s}(\R^N; \R^N)}
\aleq \| w \|_{\W^{1,r}(\R^N)} \| \vc{V} \|_{\L^q(\R^N; \R^N)}.
\end{equation*}
\end{Lemma}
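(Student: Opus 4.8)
The plan is to view $\Grad\Delta^{-1}\Grad$ as the matrix of classical Calder\'on--Zygmund operators $\mathcal R_{ij}=\partial_i\partial_j\Delta^{-1}=-R_iR_j$, $R_k$ being the $k$-th Riesz transform, so that $\mathcal R_{ij}$ is convolution with a kernel $k_{ij}$ that is smooth on $\R^N\setminus\{0\}$, homogeneous of degree $-N$, and obeys the size and H\"ormander bounds $|k_{ij}(z)|\aleq|z|^{-N}$, $|\Grad k_{ij}(z)|\aleq|z|^{-N-1}$. The left-hand side of the asserted inequality equals $\sum_j\bigl(\mathcal R_{ij}[wV_j]-w\,\mathcal R_{ij}[V_j]\bigr)$, so it suffices to estimate, for each scalar component $g=V_j$, the commutator $\mathcal C[w]g\eqldef\mathcal R_{ij}[wg]-w\,\mathcal R_{ij}[g]$, which is formally the integral operator with kernel $(w(y)-w(x))\,k_{ij}(x-y)$. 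Since $r<N$ we have $w\in\W^{1,r}(\R^N)\hookrightarrow\L^{r^{*}}(\R^N)$ with $\frac1{r^{*}}=\frac1r-\frac1N$; combined with the $\L^p$-boundedness of $\mathcal R_{ij}$, $1<p<\infty$, and H\"older's inequality this makes both $\mathcal R_{ij}[wg]$ and $w\,\mathcal R_{ij}[g]$ meaningful, so all computations can be carried out first for smooth, compactly supported $w,g$ and then passed to the limit by density.

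The estimate is of exactly the type covered by the bilinear commutator theory of Coifman and Meyer --- as noted just above the statement --- and the quickest route is to invoke it: the bilinear map $(w,g)\mapsto\mathcal C[w]g$ has a symbol for which their results yield boundedness from $\W^{1,r}\times\L^q$ into the fractional Sobolev space $\W^{\beta,s}$, the deep input (the multilinear $T1$-type boundedness) being taken from \cite{COME}, and one only has to check that the kernel $(w(y)-w(x))k_{ij}(x-y)$ fits the hypotheses, which it does since $k_{ij}$ is a standard Calder\'on--Zygmund kernel. The admissible exponents are then \emph{forced} by dimensional analysis: $\mathcal R_{ij}$ commutes with dilations, so $\mathcal C[w(\lambda\cdot)]g(\lambda\cdot)=(\mathcal C[w]g)(\lambda\cdot)$, and matching the homogeneities $\lambda^{1-N/r}$, $\lambda^{-N/q}$, $\lambda^{\beta-N/s}$ of the three norms under $f\mapsto f(\lambda\cdot)$ yields precisely $\beta=1-N\bigl(\frac1r+\frac1q-\frac1s\bigr)$; the requirements $\beta\in(0,1)$ and $1<s<\infty$ translate into the stated inequalities $\frac1r+\frac1q-\frac1N<\frac1s<1$ together with $\frac1r+\frac1q<1+\frac1N$.

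If one prefers a self-contained argument, it splits into an easy part and a harder one, the latter being the real obstacle. The easy part is the \emph{gain of integrability}, i.e.\ the endpoint $\beta=0$: using the pointwise bound $|w(x)-w(y)|\aleq|x-y|\bigl((\mathcal M\Grad w)(x)+(\mathcal M\Grad w)(y)\bigr)$, valid a.e.\ for Sobolev $w$ ($\mathcal M$ the Hardy--Littlewood maximal operator) and the size bound on $k_{ij}$, one gets $|\mathcal C[w]g(x)|\aleq(\mathcal M\Grad w)(x)\,(I_1|g|)(x)+I_1\bigl((\mathcal M\Grad w)|g|\bigr)(x)$ with $I_1$ the Riesz potential of order one, whence by Hardy--Littlewood--Sobolev, the $\L^p$-boundedness of $\mathcal M$ and H\"older, $\|\mathcal C[w]g\|_{\L^{s_0}}\aleq\|\Grad w\|_{\L^r}\|g\|_{\L^q}$ with $\frac1{s_0}=\frac1r+\frac1q-\frac1N$ (the hypothesis $\frac1r+\frac1q<1+\frac1N$ is what keeps $s_0$ and the intermediate exponents in $(1,\infty)$). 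The harder part is to extract genuine fractional smoothness: the size estimate on the kernel alone yields only $\beta=0$, and one must use the \emph{smoothness} of $k_{ij}$. Concretely I would estimate the $\L^s$-modulus of continuity $\|\mathcal C[w]g(\cdot+h)-\mathcal C[w]g(\cdot)\|_{\L^s}$ by splitting the $y$-integral into $\{|x-y|<2|h|\}$, treated by a truncated-potential bound, and $\{|x-y|\ge2|h|\}$, treated by the H\"ormander bound $|k_{ij}(x+h-y)-k_{ij}(x-y)|\aleq|h|\,|x-y|^{-N-1}$ together with the pointwise modulus of $w$; interweaving this with the $\L^{s_0}$-estimate to control large $|h|$ gives $\|\mathcal C[w]g(\cdot+h)-\mathcal C[w]g(\cdot)\|_{\L^s}\aleq\min\{|h|^{\theta},1\}\,\|\Grad w\|_{\L^r}\|g\|_{\L^q}$ for some $\theta\in(\beta,1)$, and inserting this into $\int|h|^{-N-\beta s}\|\mathcal C[w]g(\cdot+h)-\mathcal C[w]g(\cdot)\|_{\L^s}^{s}\,{\rm d}h$ closes the bound (the inhomogeneous $\W^{\beta,s}$-norm in the statement then follows since in the applications $w$ and $\vc V$ are compactly supported). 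The delicate points --- and the reason this is the main obstacle --- are the need for $\L^p$-valued rather than pointwise estimates of the translation modulus and the attendant bookkeeping of exponents, which is precisely where the full Calder\'on--Zygmund structure of $k_{ij}$ is genuinely used.
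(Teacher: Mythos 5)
Your primary route --- reduce $\Grad\Delta^{-1}\Grad$ to compositions of Riesz transforms, view the left-hand side as a bilinear commutator, invoke the Coifman--Meyer theory, and pin down the admissible $(r,q,s,\beta)$ by dilation invariance --- is exactly the paper's route: the lemma is stated there without proof and attributed to \cite{COME}, \cite{EF71} and \cite[Lemma 3.6]{FeNo6a}, and your scaling computation $\beta = 1 - N\bigl(\tfrac1r+\tfrac1q-\tfrac1s\bigr)$ reproduces the exponent appearing in those references. Up to that point there is nothing to object to.

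The optional ``self-contained'' sketch, however, has a genuine gap, and it occurs precisely in the regime in which the lemma is later applied. Your ``easy part'' dominates the commutator pointwise by $(\mathcal{M}\Grad w)\,I_1|g| + I_1\bigl((\mathcal{M}\Grad w)\,|g|\bigr)$ and then applies Hardy--Littlewood--Sobolev. For the second term this requires $(\mathcal{M}\Grad w)\,|g|\in \L^p$ with $p>1$, where $\tfrac1p=\tfrac1r+\tfrac1q$; the standing hypothesis only gives $\tfrac1r+\tfrac1q<1+\tfrac1N$, so $p$ may well be $\le 1$, and the parenthetical claim that this hypothesis ``keeps the intermediate exponents in $(1,\infty)$'' is false. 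Worse, in the application ($N=2$, $w=\phi\mu_\delta(\vte)\in\W^{1,r}$, $\vc{V}=\Grad u^i_\ep\in\L^q$ with $r,q<2$) one always has $\tfrac1r+\tfrac1q>1$, i.e.\ $p<1$; then $(\mathcal{M}\Grad w)\,|g|$ need not even be locally integrable and $I_1$ of it can be infinite everywhere, so the pointwise domination yields nothing. This is not bookkeeping: for $p\le 1$ the size bound $|k_{ij}(z)|\aleq |z|^{-N}$ alone cannot produce the $\L^{s_0}$ estimate, and one must exploit the cancellation of the Calder\'on--Zygmund kernel --- which is exactly the content of the Coifman--Meyer bilinear estimates (with Hardy-space targets below $p=1$) that you relegated to the ``quick route''. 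A similar, milder issue arises for $q\ge N$, where $I_1|g|$ lies in no Lebesgue space. So either keep the proof by citation, as the paper does, or prove the two endpoint bounds (the $\L^{s_0}$-type bound and the bound with one full derivative on the commutator) via the bilinear Calder\'on--Zygmund machinery and interpolate; the maximal-function shortcut is only valid under the strictly stronger assumption $\tfrac1r+\tfrac1q<1$.
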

We apply Lemma \ref{EL1} to
\begin{equation*}
w = \phi \mu_\delta (\vte), \quad \vc{V} = \Grad u^i_\ep, \quad i = 1,2,
\quad N = 2,\quad r < 2,\quad q < 2,
\end{equation*}
and we deduce
the strong convergence of the commutator in \(\L^2\)-norm.
Accordingly, we may deduce from \eqref{E12} the desired relation:
\begin{equation}
\label{E13}
\begin{aligned}
\intO{ \Bigl[ \phi^2 \Oov{ p(\vr, \vt) \vr } - \Oov{p(\vr, \vt)} \vr \Bigr] } =
\intO{ \phi^2 ( \mu_{\delta}(\vt) + g(\vt))
( \Oov{ \vr \Div \vu } - \vr \Div \vu)}.
\end{aligned}
\end{equation}
Relation \eqref{E13} is called Lions' identity. One can deduce \eqref{E3},
and, consequently, the strong convergence of the approximate densities from \eqref{E13}.
The details of this procedure are detailed in \cite{CiFeJaPe}.

\subsection{Convergence and the limit system}

Once strong convergence of the densities has been established, it is straightforward
to pass to the limit in the approximate equations.
Note that $\sqrt{\ep} \Grad \vre$ is bounded in the $\L^2$-norm uniformly
for $\ep \to 0$. Consequently, letting $\ep \to 0$ in
\eqref{R12-13}--\eqref{R15}, we obtain
\begin{subequations}
\label{E14-15}
\begin{align}
\label{E14}
&\intO{ \vr \vu \cdot \Grad \varphi }  = 0 \ \mbox{for any}\ \varphi \in \W^{1, \infty}(\Omega),\quad
0 \leq \vr < \Ov{\vr} , \quad \frac{1}{|\Omega|} \intO{ \vr} = \vr_M,\\&
\label{E15}
\intO{ \Bigl[ \frac{1}{2} \vr( \vu \otimes \vu : \Grad \bfphi -
\vu \cdot \Grad \vu \cdot \bfphi) + p (\vr, \vt)  \Div \bfphi \Bigr] }
= \intO{ \Bigl[ \mathbb{S}_\delta (\vt, \Grad \vu) : \Grad \bfphi - \vr \vc{f} \cdot \bfphi \Bigr] }
\end{align}
\end{subequations}
for any $\bfphi \in \W^{1,q}_0(\Omega; \R^2)$, $q > 2$.
Since $(\vu - \vuB) \in \W^{1,r}_0(\Omega; \R^2)$,
$1 \leq r < 2$, we have
\begin{equation}
\label{E16}
\begin{aligned}
&\intO{ \Big( \frac{1}{2} \vr |\vu|^2 + \vr e(\vr, \vt)    \Big) \vu \cdot \Grad \varphi }
+ \intO{p (\vr, \vt) \vu \cdot \Grad\varphi   } + \intO{ \vQ_\delta \cdot \Grad \varphi }
\\&- \intO{ \mathbb{S}_\delta (\vt, \Grad \vu) \cdot \vu \cdot \Grad \varphi  }
= \int_{\partial \Omega} L (\vt - \Ov{\vt})
\varphi \ {\rm dS}  - \intO{ \varphi \vr G   } - \intO{\varphi \vr \vc{f} \cdot \vu}
\\
&+ \intO{ \frac{1}{2} \vr \big(
 (\vu \otimes \vu) : \Grad (\varphi \Ov{\vu}) - \vu \cdot \Grad \vu \cdot (\varphi \Ov{\vu})  \big) }
+ \intO{ p (\vr, \vt) \Div (\varphi \Ov{\vu}) } \\
 &+ \intO{\vr \vc{f} \cdot
(\varphi \Ov{\vu}) } - \intO{ \mathbb{S}_\delta (\vt, \Grad \vu) : \Grad (\varphi \Ov{\vu}) }
 \end{aligned}
\end{equation}
for any $\varphi \in \W^{1,\infty}({\Omega})$;
and  the entropy inequality
\begin{equation}
\label{E17}
\begin{aligned}
&\intO{ \varphi
\frac{1}{\vt} \Bigl[ \mathbb{S}_\delta (\vt, \Grad \vu) : \Grad \vu + \frac{ \kappa_\delta (\vt) }{\vt}
|\Grad \vt |^2 \Bigr] } + \intO{ \frac{\vr}{\vt} G  \varphi } \\
&\leq  \int_{\partial \Omega} \varphi L \Bigl( 1 - \frac{\Ov{\vt}}{\vt} \Bigr) {\rm dS} -
\intO{ \Bigl( \frac{\vQ_\delta (\vt, \Grad \vt) }{\vt} \Bigr) \cdot \Grad \varphi }
- \intO{( \vr  s(\vr, \vt)  \vu) \cdot \Grad \varphi}.
\end{aligned}
\end{equation}
for any $\varphi \in \W^{1, \infty}(\Omega)$, $\varphi \geq 0$.

\section{Limit $\delta \to 0$}
\label{D}

Our ultimate goal is to perform the limit $\delta \to 0$ recovering the weak
formulation of the original problem. This can be done
in a similar way as in the preceding section, however, we must establish the necessary
uniform bound \emph{independent} of $\delta$.
As the bounds based on the entropy inequality \eqref{A13} hold uniformly
for $\delta \to 0$, we must only establish the bounds on
the temperature similar to those obtained in Section \ref{TM}.

\subsection{Uniform bounds}

Let $\{ \vr_\delta, \vu_\delta, \vt_\delta \}_{\delta > 0}$ be a sequence
of approximate solutions solving \eqref{E14-15}--\eqref{E17}.
Taking $\varphi = 1$ as a test function in the total energy balance \eqref{E16},
similarly to Section \ref{TM}, we obtain
\begin{equation*}
\begin{aligned}
&\int_{ \partial \Omega } \vt_\delta \ {\rm dS} \leq c_{D} +
\intO{ \vr_\delta \vc{f} \cdot \vu_\delta }
- \intO{ \frac{1}{2} \vrd \big(
 (\vud \otimes \vud) : \Grad \Ov{\vu} - \vud \cdot \Grad \vud \cdot \Ov{\vu}  \big) }\\&
 - \intO{\vrd \vc{f} \cdot
 \Ov{\vu} } + \intO{ \mathbb{S}_\delta (\vtd, \Grad \vud) : \Grad \Ov{\vu} }.
 \end{aligned}
\end{equation*}
Moreover, the equation of continuity \eqref{E14} can be used to rewrite the convective
term, we get
\begin{equation}
\label{D1}
\int_{ \partial \Omega } \vt_\delta \ {\rm dS} \leq c_D +
\intO{ \vr_\delta \vc{f} \cdot (\vu_\delta - \Ov{\vu} ) } + \intO{  \vrd \vud \cdot \Grad \vud \cdot \Ov{\vu}   }
 +  \intO{ \mathbb{S}_\delta (\vtd, \Grad \vud) : \Grad \Ov{\vu} }.
\end{equation}
Next, taking $\varphi = 1$ in the entropy inequality \eqref{E17}, we find
\begin{equation}
\label{D2}
\intO{
\frac{1}{\vtd} \Bigl[ \mathbb{S}_\delta (\vtd, \Grad \vud) : \Grad \vud + \frac{ \kappa_\delta (\vtd) }{\vtd}
|\Grad \vtd |^2 \Bigr] } + \intO{ \frac{\vrd}{\vtd} G   } \leq  \int_{\partial \Omega}
L \Bigl( 1 - \frac{\Ov{\vt}}{\vtd} \Bigr) {\rm dS}.
\end{equation}
Our goal, similarly to Section \ref{TM}, is to control all integrals on the
right--hand side of \eqref{D1} by means of a suitable norm
of $\vtd$. First observe that, by virtue of \eqref{korn} and \eqref{porn},  we obtain
\begin{equation}
\label{D3}
\begin{aligned}
\Bigl| \intO{ \vr_\delta \vc{f} \cdot (\vu_\delta - \Ov{\vu} ) } \Bigr|
&\aleq \| \mathbb{D} \vud - \mathbb{D} \Ov{\vu} \|_{\L^r(\Omega; \R^4)}
\leq \| \mathbb{D} \vud \|_{\L^r(\Omega; \R^4)}    +
\| \mathbb{D} \Ov{\vu}\|_{\L^r(\Omega; \R^4)}\\
&= \bigl\| \vtd^{\frac{1}{2} }\vtd^{- \frac{1}{2}} \mathbb{D} \vud \bigr\|_{\L^r(\Omega; \R^4)}
+ \| \mathbb{D} \Ov{\vu}\|_{\L^r(\Omega; \R^4)}
\aleq \| \vtd \|^{\frac{1}{2}}_{\L^s(\Omega)} + \| \mathbb{D} \Ov{\vu}\|_{\L^r(\Omega; \R^4)}
\end{aligned}
\end{equation}
for some $1 < r < 2$, $s \geq 1$. Note that according to the entropy estimates \eqref{D2},
the norm $\vtd^{-\frac{1}{2}} \mathbb{D} \vud$
is bounded in the $\L^2$-norm.
Next, we handle the integral
\begin{equation*}
\Bigl| \intO{ \mathbb{S}_\delta (\vtd, \Grad \vud) : \Grad \Ov{\vu} } \Bigr|
\aleq \| \Grad \Ov{\vu} \|_{\L^q(\Omega; \R^4)}\| (1 + \delta \vtd^a )
| \mathbb{D} \vud | \|_{\L^r(\Omega)},
\ \frac{1}{q} + \frac{1}{r} = 1,
\end{equation*}
where we focus on the case $\alpha = 0$ in \eqref{I10} as otherwise
the estimates would be the same as in Section \ref{TM}.
In view of the entropy estimates \eqref{D2}, we have
\begin{equation}
\label{D4}
\bigl\| \vtd^{-\frac{1}{2}} \mathbb{D} \vud \bigr\|_{\L^2(\Omega; \R^4)} +
\bigl\| \sqrt{\delta} \vtd^{\frac{a - 1}{2}} \mathbb{D} \vud \bigr\|_{\L^2(\Omega; \R^4)}
\leq c_{D}.
\end{equation}
Consequently, by interpolation,  we get
\begin{equation*}
\begin{aligned}
& \bigl\| (1 + \delta \vtd^a ) | \mathbb{D} \vud | \bigr\|_{\L^r(\Omega)} \aleq
\| \mathbb{D} \vud \|_{\L^r(\Omega; \R^4)} +\| \delta \vtd^a \mathbb{D} \vud\|_{\L^r(\Omega;\R^4)}\\
&= \| \vtd^{\frac{1}{2}} \vtd^{-\frac{1}{2}} \mathbb{D} \vud \|_{\L^r(\Omega; \R^4)} +
\sqrt{\delta} \bigl\| \vtd^{\frac{a + 1}{2}} \sqrt{\delta} \vtd^{\frac{a - 1}{2}}
\mathbb{D} \vud \bigr\|_{\L^r(\Omega; \R^4)}
\aleq \| \vtd \|^{\frac{1}{2}}_{\L^s(\Omega)} +
\sqrt{\delta} \| \vtd \|^{\frac{a + 1}{2}}_{\L^s(\Omega)}
\end{aligned}
\end{equation*}
for some $s \geq 1$ as soon as $1 \leq r < 2$.
Thus we may infer that
\begin{equation}
\label{D5}
\Bigl| \intO{ \mathbb{S}_\delta (\vtd, \Grad \vud) : \Grad \Ov{\vu} } \Bigr|
\aleq \| \Grad \Ov{\vu} \|_{\L^q(\Omega; \R^4)} \bigl( \| \vtd \|^{\frac{1}{2}}_{\L^s(\Omega)} +
\sqrt{\delta} \| \vtd \|^{\frac{a + 1}{2}}_{\L^s(\Omega)} \bigr)
\ \mbox{for some}\ s\geq 1 \ \mbox{if}\ q > 2.
\end{equation}
Finally, we have to estimate the integral
\begin{equation*}
\Bigl| \intO{  \vrd \vud \cdot \Grad \vud \cdot \Ov{\vu}   } \Bigr|
\aleq \| \Ov{\vu} \|_{\L^q(\Omega; \R^2)} \| \vud \cdot \Grad \vud \|_{\L^r(\Omega; \R^2)},
\ \frac{1}{q} + \frac{1}{r} = 1.
\end{equation*}
Furthermore, we may notice that
\begin{equation*}
\| \vud \cdot \Grad \vud\|_{\L^r(\Omega; \R^2)} \aleq
\bigl( \| (\vud - \Ov{\vu})  \cdot \Grad (\vud - \Ov{\vu})\|_{\L^r(\Omega; \R^2)} +
\| \Ov{\vu} \|^2_{\W^{1, 2r}(\Omega; \R^2)}+c(\delta) \bigr).
\end{equation*}
Next, we use \eqref{D4} and proceed exactly as in Section \ref{TM} to conclude
\begin{equation}
\label{D6}
\Bigl| \intO{  \vrd \vud \cdot \Grad \vud \cdot \Ov{\vu}   } \Bigr|  \aleq \| \Ov{\vu} \|_{\L^q(\Omega; \R^2)}
\Bigl( \| \vtd \|_{\L^s(\Omega)} + \| \Ov{\vu} \|^2_{\W^{1, q}(\Omega; \R^2)} \Bigr)
\ \mbox{for some} \ q, s \geq 1.
\end{equation}
Summing up \eqref{D1}, \eqref{D3}, \eqref{D5} and \eqref{D6} to get
\begin{equation}
\label{D7}
\begin{aligned}
&\int_{\partial \Omega} \vtd \ {\rm dS} \aleq
1 + \| \vtd \|^{\frac{1}{2}}_{\L^s(\Omega)} + \| \Ov{\vu}\|_{\W^{1,q}(\Omega; \R^4)}^4\\&
+ \| \Ov{\vu} \|_{\W^{1,q}(\Omega; \R^4)} \bigl( \| \vtd \|^{\frac{1}{2}}_{\L^s(\Omega)} +
\sqrt{\delta} \| \vtd \|^{\frac{a + 1}{2}}_{\L^s(\Omega)} \bigr) +
\| \Ov{\vu} \|_{\L^q(\Omega; \R^2)}
\| \vtd \|_{\L^s(\Omega)}
\end{aligned}
\end{equation}
for some finite $s,q \geq 1$.

At this stage, we need the following extension lemma proved in \cite[Lemma A1]{CiFeJaPe}.

\begin{Lemma}
\label{LD1}
Let $\Omega \subset \R^2$  be a bounded Lipschitz domain.
Let $\Ov{\vu} \in \W^{1,p}(\Omega; \R^2)$, $1 < p < \infty$, be given such that
$\Ov{\vu} \cdot \vc{n}_{|_{\partial \Omega}} = 0$. Let $q$ be given such that
\(
1< q < \frac{2p}{2 - p},
\)
if \(p < 2\) and $q > 1$ arbitrary finite otherwise.
Then for any $\omega > 0$, there exists
$\Ov{\vu}_\omega \in \W^{1,p}(\Omega; \R^N)$ with the following properties:
\begin{itemize}

\item \(\Ov{\vu}_\omega = \Ov{\vu} \ \mbox{on}\ \partial \Omega \ \mbox{in the sense of traces,}\)

\item \(\Div \Ov{\vu}_\omega = 0\ \mbox{in}\ \Omega\),

\item \(\|\Ov{\vu}_\omega \|_{\L^q(\Omega; \R^2)} < \omega\),

\item  \(\| \Ov{\vu}_\omega \|_{\W^{1,p}(\Omega; \R^2)}
\leq c(\omega, p,q) \| \Ov{\vu} \|_{\W^{1,p}(\Omega;\R^2)}.\)

\end{itemize}

\end{Lemma}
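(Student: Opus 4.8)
The plan is to use the planar stream--function representation of divergence--free vector fields together with a cut--off localising near $\partial \Omega$, exploiting that the constant $c(\omega,p,q)$ is permitted to deteriorate as $\omega \to 0$. First I would replace $\Ov{\vu}$ by a genuinely divergence--free lifting $\vc{w}$ of its boundary trace: if $\Div \Ov{\vu} = 0$ one takes $\vc{w} = \Ov{\vu}$, and in general $\vc{w} := E\Ov{\vu} - \mathcal{B}\bigl[ \Div (E\Ov{\vu}) \bigr]$, where $E$ is a bounded extension operator $\W^{1-1/p,p}(\partial \Omega) \to \W^{1,p}(\Omega)$ and $\mathcal{B}$ is the Bogovskii operator; since $\int_\Omega \Div (E \Ov{\vu}) = \int_{\partial \Omega} \Ov{\vu} \cdot \vc{n} = 0$, the estimate \eqref{R2} gives $\Div \vc{w} = 0$, $\vc{w}_{|_{\partial \Omega}} = \Ov{\vu}_{|_{\partial \Omega}}$ and $\| \vc{w} \|_{\W^{1,p}(\Omega)} \aleq \| \Ov{\vu} \|_{\W^{1,p}(\Omega)}$. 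Because $\Div \vc{w} = 0$ and $\vc{w} \cdot \vc{n} = 0$ holds pointwise on $\partial \Omega$, the field $\vc{w}$ admits a single--valued stream function $\psi$ with $\nabla^\perp \psi = \vc{w}$ (here $\nabla^\perp = (-\partial_{x_2}, \partial_{x_1})$); as $\nabla \psi$ is a rotation of $\vc{w} \in \W^{1,p}(\Omega)$ one has $\psi \in \W^{2,p}(\Omega)$, and $\psi$ is constant on each connected component of $\partial \Omega$. Adding to $\vc{w}$ a field $\nabla^\perp \zeta$ with $\zeta$ locally constant near $\partial \Omega$ --- so that $\nabla^\perp \zeta$ vanishes in a neighbourhood of $\partial \Omega$, belongs to $\W^{1,p}_0(\Omega)$ and changes neither the trace nor the divergence of $\vc{w}$ --- at the cost of another multiple of $\| \Ov{\vu} \|_{\W^{1,p}(\Omega)}$, I may assume that $\psi = 0$ on \emph{all} of $\partial \Omega$, so that $\psi \in \W^{2,p}(\Omega) \cap \W^{1,p}_0(\Omega)$.

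Next, for small $\eta > 0$ pick $\phi_\eta \in \C^\infty(\Ov{\Omega})$ with $0 \leq \phi_\eta \leq 1$, $\phi_\eta \equiv 1$ on $\{ x : \dist(x, \partial \Omega) \leq \eta \}$, $\phi_\eta \equiv 0$ on $\{ x : \dist(x, \partial \Omega) \geq 2\eta \}$, $|\Grad \phi_\eta| \aleq \eta^{-1}$ and $|\Grad^2 \phi_\eta| \aleq \eta^{-2}$, and put
\[
\Ov{\vu}_\omega := \nabla^\perp (\phi_\eta \psi) = \phi_\eta \, \vc{w} + \psi \, \nabla^\perp \phi_\eta .
\]
Then $\Div \Ov{\vu}_\omega = 0$ identically, $\Ov{\vu}_\omega$ is supported in $\{ x : \dist(x, \partial \Omega) < 2\eta \}$, and, since $\phi_\eta \equiv 1$ and $\psi \equiv 0$ in a neighbourhood of $\partial \Omega$, the trace and the normal derivative of $\phi_\eta \psi$ on $\partial \Omega$ agree with those of $\psi$; consequently $\Ov{\vu}_\omega = \nabla^\perp \psi = \vc{w} = \Ov{\vu}$ on $\partial \Omega$ in the sense of traces. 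This already establishes the first two properties.

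It remains to estimate the two norms. Write $p^\ast = \frac{2p}{2-p}$ if $p < 2$, and let $p^\ast$ be an arbitrary large finite exponent if $p \geq 2$; then $\W^{1,p}(\Omega) \hookrightarrow \L^{p^\ast}(\Omega)$, so $\vc{w} \in \L^{p^\ast}(\Omega)$, and Hardy's inequality (legitimate because $\psi \in \W^{1,p^\ast}_0(\Omega)$) gives $\| \psi / \dist(\cdot, \partial \Omega) \|_{\L^{p^\ast}(\Omega)} \aleq \| \Grad \psi \|_{\L^{p^\ast}(\Omega)} \aleq \| \Ov{\vu} \|_{\W^{1,p}(\Omega)}$. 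Since $| \{ x : \dist(x, \partial \Omega) < 2\eta \} | \aleq \eta$ for a bounded Lipschitz domain, Hölder's inequality with the exponent pair $(q, p^\ast)$ gives, for the prescribed $q < p^\ast$,
\[
\| \Ov{\vu}_\omega \|_{\L^q(\Omega)} \aleq \| \vc{w} \|_{\L^q(\{ \dist < 2\eta \})} + \Bigl\| \tfrac{\psi}{\dist(\cdot, \partial \Omega)} \Bigr\|_{\L^q(\{ \dist < 2\eta \})} \aleq \eta^{\frac1q - \frac1{p^\ast}} \| \Ov{\vu} \|_{\W^{1,p}(\Omega)} ,
\]
which is $< \omega$ once $\eta = \eta(\omega, p, q)$ is chosen small enough. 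For the $\W^{1,p}$ norm, differentiating $\phi_\eta \psi$ twice produces $\phi_\eta \Grad^2 \psi$, $\Grad \phi_\eta \otimes \Grad \psi$ and $\psi \, \Grad^2 \phi_\eta$; the first is bounded by $\| \Grad^2 \psi \|_{\L^p(\Omega)} \aleq \| \Ov{\vu} \|_{\W^{1,p}(\Omega)}$, the second by $\eta^{-1} \| \Grad \psi \|_{\L^p(\Omega)}$, and the third by $\eta^{-2} \| \psi \|_{\L^p(\{ \dist < 2\eta \})} \aleq \eta^{-1} \| \Grad \psi \|_{\L^p(\Omega)}$, again by Hardy's inequality. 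Hence $\| \Ov{\vu}_\omega \|_{\W^{1,p}(\Omega)} \aleq \eta^{-1} \| \Ov{\vu} \|_{\W^{1,p}(\Omega)}$, and once $\eta$ is fixed as above the factor $\eta^{-1}$ is absorbed into the constant $c(\omega, p, q)$, which yields the last property.

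I expect the $\L^q$ bound to be the main obstacle. The cut--off term $\psi \, \nabla^\perp \phi_\eta$ is a priori only of size $\eta^{-1} \| \psi \|_{\L^q(\{ \dist < 2\eta \})}$, which need not be small; it is precisely the vanishing of $\psi$ on $\partial \Omega$ --- forced by $\Ov{\vu} \cdot \vc{n}_{|_{\partial \Omega}} = 0$ and arranged \emph{globally} by the correction of the first step --- that lets Hardy's inequality convert the dangerous factor $\eta^{-1}$ into $\eta^{-1} \dist(\cdot, \partial \Omega) \aleq 1$, while the \emph{strict} inequality $q < \frac{2p}{2-p}$ supplies the extra power $\eta^{1/q - 1/p^\ast}$ that carries the smallness. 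The $\omega$--dependence of the constant is unavoidable: by the trace inequality any divergence--free extension of $\Ov{\vu}_{|_{\partial \Omega}}$ supported in an $\eta$--collar of $\partial \Omega$ has $\W^{1,p}$ norm at least of order $\eta^{-(p-1)/p} \| \Ov{\vu}_{|_{\partial \Omega}} \|$, so only an $\omega$--dependent bound can be expected.
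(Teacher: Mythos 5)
The paper does not prove Lemma \ref{LD1} itself but defers to \cite[Lemma A1]{CiFeJaPe}; your argument is the classical Hopf-type boundary-layer construction --- stream function of a divergence-free lifting, which vanishes on $\partial \Omega$ precisely because $\Ov{\vu} \cdot \vc{n}_{|_{\partial \Omega}} = 0$, multiplied by a cutoff supported in an $\eta$-collar, with Hardy's inequality absorbing the factor $\eta^{-1}$ coming from the cutoff and the strict inequality $q < \frac{2p}{2-p}$ supplying the smallness --- and this is essentially the construction used there; the proof is correct. Two minor remarks: $\psi$ vanishes on $\partial \Omega$, not in a neighbourhood of it (the trace identity already follows from $\phi_\eta \equiv 1$ near $\partial \Omega$), and since $\eta$ is determined by the requirement $\eta^{\frac{1}{q} - \frac{2-p}{2p}} \| \Ov{\vu} \|_{\W^{1,p}(\Omega)} < \omega$, your constant $c$ actually depends on $\omega$ and $\| \Ov{\vu} \|_{\W^{1,p}(\Omega)}$ through their ratio rather than on $\omega$ alone, which is harmless since the lemma is applied to a single fixed field $\Ov{\vu}$.
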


The idea is to replace $\Ov{\vu}$ by $\Ov{\vu}_\omega$ in the energy balance \eqref{E16},
and, subsequently in \eqref{D7},
to make the coefficient  $\| \Ov{\vu}_\omega \|_{\L^q(\Omega;\R^2)}$
multiplying the highest power of the norm of $\vtd$ small enough.
Then the uniform bound on $\vtd$ is obtained from \eqref{D2} and \eqref{D7}
via a compactness argument. To carry out this program, some preliminaries are necessary.
The first may be seen as a direct consequence of the Sobolev embedding
$\W^{1,2} \hookrightarrow \L_\Phi$ already used in Section \ref{TM}.

\begin{Lemma}
\label{DL2}
Let $\Omega \subset \R^2$ be a bounded Lipschitz domain.
There exists a function
\begin{equation*}
\chi\eqldef \chi(\Lambda_1, \Lambda_2,s): [0, \infty)^2 \times [1, \infty) \to \R
\end{equation*}
with the following property: If $\psi > 0$ a.e in $\Omega$ and there exist \(\Lambda_1,
\Lambda_2\geq 0\) such that
\begin{equation*}
\| \Grad \log (\psi) \|_{\L^2(\Omega)} \leq \Lambda_1\quad\text{and}\quad
\int_{\partial \Omega} \psi \ {\rm dS} \leq \Lambda_2,
\end{equation*}
then
\begin{equation*}
\| \psi \|_{\L^s(\Omega)} \leq \chi (\Lambda_1, \Lambda_2, s).
\end{equation*}
\end{Lemma}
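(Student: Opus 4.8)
The plan is to work with $v:=\log\psi$ and to exploit that, although only $\Grad v$ is controlled in $\L^2(\Omega)$, the boundary condition yields an \emph{upper} bound on the boundary mean of $v$ — which is exactly what is needed to bound $\psi=e^v$ in $\L^s(\Omega)$ through the Trudinger--Moser inequality. Since enlarging $\Lambda_1$ or $\Lambda_2$ only weakens the hypotheses, I may assume $\Lambda_1\ge1$ and $\Lambda_2\ge1$ (and at the end redefine $\chi$ by composing with $(\Lambda_1,\Lambda_2)\mapsto(\max(\Lambda_1,1),\max(\Lambda_2,1))$). Set $\bar v:=\frac1{|\partial\Omega|}\int_{\partial\Omega}v\,{\rm dS}$. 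Applying Jensen's inequality to the convex function $\exp$ on the probability space $\bigl(\partial\Omega,\,|\partial\Omega|^{-1}{\rm dS}\bigr)$ gives
\[
\exp(\bar v)\le\frac1{|\partial\Omega|}\int_{\partial\Omega}e^v\,{\rm dS}=\frac1{|\partial\Omega|}\int_{\partial\Omega}\psi\,{\rm dS}\le\frac{\Lambda_2}{|\partial\Omega|},
\]
hence $\bar v\le A:=\log\bigl(\Lambda_2/|\partial\Omega|\bigr)$.

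Next, by the Poincar\'e inequality with boundary mean on the bounded Lipschitz domain $\Omega$, $\|v-\bar v\|_{\W^{1,2}(\Omega)}\le c_P(\Omega)\,\|\Grad v\|_{\L^2(\Omega)}\le c_P(\Omega)\,\Lambda_1$. Writing $w:=v-\bar v$ and invoking the embedding $\W^{1,2}(\Omega)\hookrightarrow\L_\Phi(\Omega)$ in the quantitative form \eqref{A23} together with this bound, I get
\[
\intO{\exp\bigl(\omega\,|w|^2\bigr)}\le1+|\Omega|,\qquad\omega:=\omega(\Lambda_1):=\bigl(c_S\,c_P(\Omega)\,\Lambda_1\bigr)^{-2}.
\]
For $s\ge1$ and any real $t$, completing the square gives $s|t|\le\omega t^2+s^2/(4\omega)$, whence $e^{s|w|}\le e^{s^2/(4\omega)}\,e^{\omega|w|^2}$; combining this with $v=w+\bar v\le|w|+A$ yields
\[
\intO{\psi^s}=\intO{e^{sv}}\le e^{sA}\,e^{s^2/(4\omega)}\,(1+|\Omega|),
\]
that is, $\|\psi\|_{\L^s(\Omega)}\le\chi(\Lambda_1,\Lambda_2,s):=\dfrac{\Lambda_2}{|\partial\Omega|}\,e^{s/(4\omega(\Lambda_1))}\,(1+|\Omega|)^{1/s}$, which is the assertion.

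The step requiring care — and the main obstacle — is the rigorous justification of the Jensen and Poincar\'e steps, since \emph{a priori} one controls only $\Grad v$ in $\L^2(\Omega)$, so $v$ need neither lie in $\W^{1,2}(\Omega)$ nor possess an integrable trace on $\partial\Omega$. I would remove this difficulty by truncation: carry out the argument above for $v^{(M)}:=\max\bigl(\min(v,M),-M\bigr)\in\W^{1,2}(\Omega)\cap\L^\infty(\Omega)$, which satisfies $\|\Grad v^{(M)}\|_{\L^2(\Omega)}\le\Lambda_1$ and, since $v^{(M)}\le\max(v,-M)$ pointwise, also $\int_{\partial\Omega}e^{v^{(M)}}\,{\rm dS}\le\Lambda_2+|\partial\Omega|$ (the truncation commuting with the boundary trace in the mild regularity setting in which the lemma is applied, where both $\psi$ and $\log\psi$ are Sobolev functions). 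This produces the bound of the previous paragraph for $v^{(M)}$ with $\Lambda_2+|\partial\Omega|$ in place of $\Lambda_2$, uniformly in $M$; letting $M\to\infty$ and using $v^{(M)}\to v$ a.e.\ in $\Omega$, Fatou's lemma applied to $\intO{\exp(sv^{(M)})}$ gives the same bound for $\intO{\psi^s}$. Redefining $\chi$ with $\Lambda_2+|\partial\Omega|$ instead of $\Lambda_2$ completes the proof.
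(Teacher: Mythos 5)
Your proof is correct and follows exactly the route the paper indicates for Lemma \ref{DL2} (which the paper dispatches in one line as ``a direct consequence of the embedding $\W^{1,2}\hookrightarrow \L_\Phi$''): Jensen on $\partial\Omega$ to bound the boundary mean of $\log\psi$ from above, a boundary-mean Poincar\'e inequality to get $\log\psi$ into $\W^{1,2}(\Omega)$ with controlled norm, and then the quantitative Trudinger--Moser bound \eqref{A23} combined with $s|t|\le\omega t^2+s^2/(4\omega)$. The truncation step you add to justify the trace manipulations is a legitimate way to handle the low-regularity issue and is consistent with the setting in which the lemma is actually applied.
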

Next, we show the following:
\begin{Lemma}
\label{DL3}
Let $\Omega \subset \R^2$ be a bounded Lipschitz domain.
Let $\Lambda_1 \geq 0$, $Z\geq 0$, $s \geq 1$, $\beta \in (0,1)$, and $\omega > 0$ be such that
\begin{equation*}
\omega \chi(\Lambda_1,1, s) < 1,
\end{equation*}
where $\chi$ is the function identified in Lemma \ref{DL2}.
Then there exists
\(C\eqldef C(\Lambda_1, Z,s, \beta, \omega)\)
such that
\begin{equation*}
\int_{\partial \Omega} \psi\ {\rm dS} \leq C
\end{equation*}
for any $\psi$, $\psi > 0$ almost everywhere in $\Omega$ satisfying
\begin{equation*}
\| \Grad \log (\psi) \|_{\L^2(\Omega)} \leq \Lambda_1\quad\text{and}\quad
\int_{\partial \Omega} \psi \ {\rm dS} \leq Z \bigl(1 + \| \psi \|^\beta_{\L^s(\Omega)} \bigr)
+ \omega \| \psi \|_{\L^s(\Omega)}.
\end{equation*}
\end{Lemma}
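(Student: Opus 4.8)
The proof will rely on a single observation: the constraint $\|\Grad\log\psi\|_{\L^2(\Omega)}\leq\Lambda_1$ is invariant under multiplication of $\psi$ by a positive constant, since such a constant disappears after taking $\log$ and then $\Grad$. Set $A\eqldef\int_{\partial\Omega}\psi\,{\rm dS}$. If $A=0$ there is nothing to prove, so assume $A>0$ and consider the rescaled function $\tilde\psi\eqldef A^{-1}\psi$. Then $\tilde\psi>0$ a.e. in $\Omega$, $\|\Grad\log\tilde\psi\|_{\L^2(\Omega)}=\|\Grad\log\psi\|_{\L^2(\Omega)}\leq\Lambda_1$, and $\int_{\partial\Omega}\tilde\psi\,{\rm dS}=1$. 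Applying Lemma \ref{DL2} to $\tilde\psi$ with the choice $\Lambda_2=1$ and using homogeneity of the $\L^s$--norm, I would obtain
\[
\|\psi\|_{\L^s(\Omega)}=A\,\|\tilde\psi\|_{\L^s(\Omega)}\leq A\,\chi(\Lambda_1,1,s)\eqldef A\kappa,
\]
where $\kappa=\chi(\Lambda_1,1,s)$ is a finite constant depending only on $\Lambda_1$, $s$ and $\Omega$, and crucially independent of $A$.

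Next I would feed this bound into the structural assumption on $\int_{\partial\Omega}\psi\,{\rm dS}$, which becomes a closed inequality for $A$:
\[
A\leq Z\bigl(1+(A\kappa)^\beta\bigr)+\omega A\kappa
 = Z+Z\kappa^\beta A^\beta+(\omega\kappa)A .
\]
Here the standing assumption $\omega\chi(\Lambda_1,1,s)=\omega\kappa<1$ is exactly what allows the linear term $(\omega\kappa)A$ to be moved to the left--hand side, yielding $(1-\omega\kappa)A\leq Z+Z\kappa^\beta A^\beta$. Finally, because $\beta\in(0,1)$, Young's inequality gives $A^\beta\leq\eta A+C(\eta,\beta)$ for every $\eta>0$; choosing $\eta$ so small that $Z\kappa^\beta\eta\leq\tfrac12(1-\omega\kappa)$ and absorbing once more produces
\[
A\leq\frac{2\bigl(Z+Z\kappa^\beta C(\eta,\beta)\bigr)}{1-\omega\kappa}\eqldef C(\Lambda_1,Z,s,\beta,\omega),
\]
which is the asserted bound on $\int_{\partial\Omega}\psi\,{\rm dS}$.

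I do not expect any real analytic obstacle here; the content is entirely in setting up the right normalization. The one point that must be handled with care is the reduction to $\int_{\partial\Omega}\tilde\psi\,{\rm dS}=1$: it is precisely this that makes the constant $\kappa$ in the bound $\|\psi\|_{\L^s(\Omega)}\leq A\kappa$ independent of $A$, so that inserting it into the hypothesis produces a genuinely closed inequality rather than a circular one. It is also worth noting that the two smallness features are both used and neither can be dropped: $\omega\kappa<1$ handles the term linear in $A$, while $\beta<1$ is needed for the remaining term $Z\kappa^\beta A^\beta$ to be genuinely sublinear (if $\beta=1$ one would instead need $(Z+\omega)\kappa<1$, which is not assumed).
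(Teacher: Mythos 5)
Your proof is correct and rests on exactly the same key idea as the paper's: normalizing $\psi$ by its boundary integral (exploiting the scale invariance of $\|\Grad\log\psi\|_{\L^2}$) so that Lemma \ref{DL2} applies with $\Lambda_2=1$ and yields $\|\psi\|_{\L^s(\Omega)}\leq \chi(\Lambda_1,1,s)\int_{\partial\Omega}\psi\,{\rm dS}$. The only difference is presentational: the paper runs a contradiction argument on a sequence with $b_n\to\infty$, using $b_n^{-(1-\beta)}\to 0$ to dispose of the sublinear term, whereas you close the inequality directly via Young's inequality and absorption, which has the minor advantage of producing an explicit constant.
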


\begin{proof}
Arguing by contradiction, we suppose that there is a sequence $\{ \psi_n \}_{n=1}^\infty$ such that
\begin{subequations}
\begin{align}
&\psi_n > 0 \ \mbox{a.e in}\ \Omega,\\&
\|\log(\psi_n) \|_{\L^2(\Omega)} \leq \Lambda_1,\\&
\label{D8}
\int_{\partial \Omega} \psi_n \ {\rm dS} \leq  Z \bigl(1 + \| \psi_n \|^\beta_{\L^s(\Omega)} \bigr)
+ \omega \| \psi_n \|_{\L^s(\Omega)},\\&
b_n \eqldef \int_{\partial \Omega} \psi_n \ {\rm dS} \to \infty \ \mbox{as}\ n \to \infty.
\end{align}
\end{subequations}
Consider the normalized sequence
\begin{equation*}
\xi_n \eqldef \frac{\psi_n }{\int_{\partial \Omega} \psi_n \ {\rm dS}}.
\end{equation*}
We have
\begin{equation*}
\int_{\partial \Omega} \xi_n\ {\rm{dS}}= 1\quad\text{and}\quad \| \Grad \log (\xi_n) \|_{\L^2(\Omega)} =
\| \Grad \log (\psi_n) \|_{\L^2(\Omega)} \leq \Lambda_1.
\end{equation*}
It follows from Lemma \ref{DL2} that
\begin{equation*}
\| \xi_n \|_{\L^s(\Omega)} \leq \chi(\Lambda_1, 1,s).
\end{equation*}
Dividing \eqref{D8} on $b_n = \int_{\partial \Omega} \psi_n \ {\rm dS}$, we obtain
\begin{equation*}
\begin{aligned}
1 = \int_{\partial \Omega} \xi_n {\rm{dS}} &\leq Z \Bigl( \frac{1}{b_n}
+ \frac{1}{b_n^{1 - \beta}} \| \xi_n \|_{\L^s(\Omega)}^\beta \Bigr)
+ \omega \| \xi_n \|_{\L^s(\Omega)} \\ &\leq
Z \Bigl( \frac{1}{b_n} + \frac{1}{b_n^{1 - \beta}} \chi^{\beta}(\Lambda_1,1,s) \Bigr)
+ \omega \chi(\Lambda_1,1,s) \to \omega \chi(\Lambda_1,1,s) < 1
\ \mbox{as}\ n \to \infty,
\end{aligned}
\end{equation*}
which is a contradiction.
\end{proof}

We apply Lemmas \ref{DL2} and \ref{DL3} to
$\psi = \vtd$, $\Lambda_1$ determined by means of the entropy
estimates \eqref{D2}, and $s$, $\beta$, $Z$ as in \eqref{D7}. In accordance
with Lemma \ref{LD1}, we fix $\Ov{\vu} = \Ov{\vu}_\omega$ so that
\begin{equation*}
c_{D} \| \Ov{\vu}_{\omega} \|_{\L^q(\Omega; \R^2)} < \omega
\end{equation*}
in \eqref{D7}. In accordance with Lemma \ref{DL3}, we conclude that
\begin{equation*}
\intO{ \vtd } \leq c_{D} \ \mbox{uniformly for}\ \delta \to 0,
\end{equation*}
which implies that
\begin{equation}
\label{D9}
\| \log (\vtd) \|_{\W^{1,2}(\Omega)} \leq c_{D}.
\end{equation}

\subsection{Convergence}

At this stage, the same machinery used in Section \ref{TM} allows us to conclude that
\begin{equation}
\label{D10}
\begin{aligned}
\| \vtd^\beta \|_{\L^s(\Omega)} &\leq c(\beta, s) \ \mbox{for any}\ \beta \in \R, \ 1 \leq s < \infty,\\
\| \Grad \vtd \|_{\L^r(\Omega)} + \| \Grad \log(\vtd) \|_{\L^2(\Omega)} &\leq c(r)
\ \mbox{for any}\ 1 \leq r < 2,\\
\| \vud \|_{\W^{1,r}(\Omega; \R^2)} &\leq c(r) \ \mbox{for any}\ 1 \leq r < 2.
\end{aligned}
\end{equation}
The uniform bounds \eqref{D10}, together with
\begin{equation*}
0 \leq \vrd < \Ov{\vr}\ \mbox{a.e in} \ \Omega
\end{equation*}
are strong enough to perform the limit passage in the equations by using the same arguments
as in Section \ref{E}. We have completed the proof of Theorem \ref{TM1}.

\section{Concluding remarks}
\label{C}

We have considered the EOS of the form
\begin{equation*}
p(\vr, \vt) \eqldef
\vr \vt h(\vr)\quad\text{and}\quad e(\vr, \vt) \eqldef c_v \vt.
\end{equation*}
In view of the fact that the density is {\it a priori} bounded and the rather strong estimates
on the temperature, the result may be extended to more general pressure law including finite
"virial series perturbation'' of the form
\begin{equation*}
p(\vr, \vt) \eqldef
\vr \vt h(\vr) + \sum_{m=1}^M b_m(\vt) \vr^{\alpha_m},\quad \alpha_m \geq 0, \quad
0 \leq b_m(\vt) \aleq \vt^{-\beta} + \vt^\beta,\quad \beta \geq 0.
\end{equation*}
Monotonicity of the pressure with respect to the density plays a crucial for stationary problems therefore the method cannot be adapted to pressure laws that are
non--monotone with respect to the density.

The asymptotic behavior of the transport coefficients could be possibly relaxed to
\begin{equation*}
\mu(\vt) \approx (1 + \vt^{\alpha_1}), \quad
\lambda(\vt) \approx (1 + \vt^{\alpha_2}),\quad \kappa (1 + \vt^{\alpha_3}),\quad 0 \leq \alpha_i < 1.
\end{equation*}
In view of the estimates in Section \ref{E}, however, the sublinear growth seems essential.

The proof depends heavily on the estimates \eqref{I11} pertinent to planar domains.
Extension to the 3-D case would be definitely limited by the available {\it a priori}
bounds on the temperature and possibly require stronger hypothesis imposed on
both the EOS
and the transport coefficients.

\renewcommand{\arraystretch}{0.91}{\small{
\paragraph*{Acknowledgments}
The research of E.F.~leading to these results has received funding from the
Czech Sciences Foundation (GA\v CR), Grant Agreement
18--05974S. The Institute of Mathematics of the Academy of Sciences of
the Czech Republic is supported by RVO:67985840.
This research was performed during the stay of E.F. as an invited professor at the INSA--Lyon.}}

\end{document}